\newcommand{\hide}[1]{}
\newcommand\remove[1]{}
\newcommand{\E}{\mathbb{E}}
\newcommand{\x}{\mathbf{x}}
\newcommand{\y}{\mathbf{y}}
\newcommand{\z}{\mathbf{z}}
\newcommand{\zero}{\mathbf{0}}
\newcommand{\eps}{\varepsilon}
\newcommand{\Prob}[1]{\ensuremath{\mathbb{P}\left(#1\right)}}
\newcommand{\RR}{\mathbb{R}}
\newcommand{\NN}{\mathbb{N}}
\DeclareMathOperator{\EE}{\mathbb{E}}
\newcommand{\pinv}[1]{ {#1}^\dagger}
\newcommand{\norm}[1]{\ensuremath{\left\|#1\right\|_2}}
\newcommand{\frobnorm}[1]{\ensuremath{\left\|#1\right\|_{\text{\rm F}}}}
\newcommand{\rank}[1]{\ensuremath{\mathrm{\textbf{{\footnotesize rank}}}\left(#1\right)}}
\newcommand{\cond}[1]{\ensuremath{\kappa^2\left(#1\right)}}
\newcommand{\nnz}[1]{\ensuremath{\mathrm{\textbf{\footnotesize nnz}}\left(#1\right)}}
\newcommand{\ravg}{\text{R}_{\text{avg}}}
\newcommand{\cavg}{\text{C}_{\text{avg}}}
\newcommand{\kappaFS}{\kappa^2_{\textrm{\tiny F}}}
\newcommand{\kappaF}{\kappa_{\textrm{\tiny F}}}
\newcommand{\ip}[2]{\left\langle {#1},\ {#2} \right\rangle}
\newcommand{\ignore}[1]{}
\newcommand{\Id}{\mathbf{I}}
\newcommand{\zeromtx}{\mathbf{0}}
\newcommand{\mat}[1]{ {\ensuremath{\mathsf{#1} }}}
\newcommand{\matA}{\mat{A}}
\newcommand{\matP}{\mat{P}}
\newcommand{\e}{\ensuremath{{\mathbf e}}}
\newcommand{\w}{{\mathbf{w}}}
\newcommand{\vecu}{\mathbf{u}}
\newcommand{\vecv}{\mathbf{v}}
\def\b{{\mathbf b}}
\newcommand{\bc}{{\b_{\mathcal{R}(\matA)^\bot } }}
\newcommand{\br}{{\b_{\mathcal{R}(\matA) } }}
\def\xls{\x_{\text{\tiny LS}}}
\newcommand{\ar}[1]{ \matA^{(#1)}}
\newcommand{\ac}[1]{ \matA_{(#1)}}
\newcommand{\colspan}[1]{\mathcal{R}(#1)}
\newtheorem{theorem}{Theorem}
\newtheorem{lemma}[theorem]{Lemma}
\newtheorem{fact}[theorem]{Fact}
\newtheorem{remark}{Remark}
\title{Randomized Extended Kaczmarz for Solving Least Squares}
\author{Anastasios Zouzias%
\thanks{Anastasios Zouzias is with the Department of Computer Science at the University of Toronto, Canada. E-mail: {\tt zouzias@cs.toronto.edu}. Part of this work was done while the author was visiting the Department of Computer Science at Princeton University.} 
\and%
Nikolaos M. Freris\thanks{Nikolaos M. Freris is with IBM Research - Z\"{u}rich, S\"{a}umerstrasse 4, 8803 R\"{u}schlikon, Switzerland. E-mail: {\tt nif@zurich.ibm.com} }}
\begin{document}

\maketitle

\begin{abstract}
We present a randomized iterative algorithm that exponentially converges in expectation to the minimum Euclidean norm least squares solution of a given linear system of equations. The expected number of arithmetic operations required to obtain an estimate of given accuracy is proportional to the square condition number of the system multiplied by the number of non-zeros entries of the input matrix. The proposed algorithm is an extension of the randomized Kaczmarz method that was analyzed by Strohmer and Vershynin.
\end{abstract}




\section{Introduction}
%

%
The Kaczmarz method is an iterative projection algorithm for solving linear systems of equations~\cite{K}. Due to its simplicity, the Kaczmarz method has found numerous applications including image reconstruction, distributed computation and signal processing to name a few~\cite{K:apps:CFM92,book:K:apps:H80,book:K:apps:Nat01,FZ12}, see~\cite{K:apps} for more applications. The Kaczmarz method has also been rediscovered in the field of image reconstruction and called ART (Algebraic Reconstruction Technique)~\cite{ART}, see also~\cite{book:Zenios,book:K:apps:H80} for additional references. It has been also applied to more general settings, see~\cite[Table~1]{K:apps} and \cite{K:tompk,K:rate:MC77} for non-linear versions of the Kaczmarz method. 

Let $\matA\in\RR^{m\times n}$ and $\b\in\RR^m$. Throughout the paper all vectors are assumed to be column vectors. The Kaczmarz method operates as follows: Initially, it starts with an arbitrary vector $\x^{(0)}\in\RR^n$. In each iteration, the Kaczmarz method goes through the rows of $\matA$ in a cyclic manner\footnote{That is, selecting the indices of the rows from the sequence $1,2,\ldots , m , 1,2 , \ldots$.} and for each selected row, say $i$-th row $\ar{i}$, it orthogonally projects the current estimate vector onto the affine hyperplane defined by the $i$-th constraint of $\matA\x = \b$, i.e., $\{\x\ |\ \ip{\ar{i}}{\x} = b_i\}$ where $\ip{\cdot}{\cdot}$  is the Euclidean inner product. More precisely, assuming that the $i_k$-th row has been selected at $k$-th iteration, then the $(k+1)$-th estimate vector $\x^{(k+1)}$ is inductively defined by
\[\x^{(k+1)} := \x^{(k)} + \lambda_k\frac{b_{i_k} - \ip{\ar{i_k}}{ \x^{(k)}}}{\norm{\ar{i_k}}^2} \ar{i_k}\]
where $\lambda_k \in \RR$ are the so-called relaxation parameters and $\norm{\cdot}$ denotes the Euclidean norm. The original Kaczmarz method corresponds to $\lambda_k = 1$ for all $k\geq 0$ and all other setting of $\lambda_k$'s are usually referred as the \emph{relaxed Kaczmarz method} in the literature~\cite{K:apps,book:Galantai}.

Kaczmarz proved that this process converges to the unique solution for square non-singular matrices~\cite{K}, but without any attempt to bound the rate of convergence. Bounds on the rate of convergence of the Kaczmarz method are given in~\cite{K:rate:MC77}, \cite{K:rate:Ansorge} and \cite[Theorem~4.4, p.120]{book:Galantai}. In addition, an error analysis of the Kaczmarz method under the finite precision model of computation is given in~\cite{phdthesis:K:error,K:error}.
%

%

Nevertheless, the Kaczmarz method converges even if the linear system $\matA\x = \b$ is overdetermined ($m>n$) and has no solution. In this case and provided that $\matA$ has full column rank, the Kaczmarz method converges to the least squares estimate. This was first observed by Whitney and Meany~\cite{K:rate:WM67} who proved that the relaxed Kaczmarz method converges provided that the relaxation parameters are within $[0,2]$ and $\lambda_k\to 0$, see also~\cite[Theorem~1]{K:relax:CEG83}, \cite{K:Tanabe} and~\cite{K:relax:Hanke90} for additional references.

In the literature there was empirical evidence that selecting the rows non-uniformly at random may be more effective than selecting the rows via Kaczmarz's cyclic manner~\cite{RK:HM93,K:apps:CFM92}. Towards explaining such an empirical evidence, Strohmer and Vershynin proposed a simple randomized variant of the Kaczmarz method that has exponential convergence \emph{in expectation}~\cite{RK} assuming that the linear system is solvable; see also~\cite{LS:RCD} for extensions to linear constraints. A randomized iterative algorithm that computes a sequence of random vectors $\x^{(0)}, \x^{(1)}, \ldots$ is said to \emph{converge in expectation} to a vector $\x^*$ if and only if $\EE \norm{\x^{(k)} - \x^*}^2\to 0$ as $k\to\infty$, where the expectation is taken over the random choices of the algorithm. Soon after~\cite{RK}, Needell analyzed the behavior of the randomized Kaczmarz method for the case of full column rank linear systems that do not have any solution~\cite{Needell09}. Namely, Needell proved that the randomized Kaczmarz estimate vector is (in the limit) within a fixed distance from the least squares solution and also that this distance is proportional to the distance of $\b$ from the column space of $\matA$. In other words, Needell proved that the randomized Kaczmarz method is effective for least squares problems whose least squares error is negligible.

In this paper we present a randomized iterative least squares solver (Algorithm~\ref{alg:REK}) that converges in expectation to the minimum Euclidean norm solution of \begin{equation}\label{eq:ls}
\min_{\x\in\RR^n}\norm{\matA \x - \b}.
\end{equation}
The proposed algorithm is based on~\cite{RK,Needell09} and inspired by~\cite{popa}. More precisely the proposed algorithm can be thought of as a randomized variant of Popa's extended Kaczmarz method~\cite{popa}, therefore we named it as \emph{randomized extended Kaczmarz}.

%
%

%
\paragraph{Organization of the paper}
In Section~\ref{sec:related}, we briefly discuss related work on the design of deterministic and randomized algorithms for solving least squares problems. In Section~\ref{sec:back}, we present a randomized iterative algorithm for projecting a vector onto a subspace (represented as the column space of a given matrix) which may be of independent interest. In addition, we discuss the convergence properties of the randomized Kaczmarz algorithm for solvable systems (Section~\ref{sec:RK}) and recall its analysis for non-solvable systems (Section~\ref{sec:noisyRK}). In Section~\ref{sec:result}, we present and analyze the randomized extended Kaczmarz algorithm. Finally, in Section~\ref{sec:impl} we provide a numerical evaluation of the proposed algorithm.
%
%
\section{Least squares solvers}\label{sec:related}
%
In this section we give a brief discussion on least squares solvers including deterministic direct and iterative algorithms together with recently proposed randomized algorithms. For a detailed discussion on deterministic methods, the reader is referred to~\cite{book:Bjork}. In addition, we place our contribution in context with prior work.
\paragraph{Deterministic algorithms}
In the literature, several methods have been proposed for solving least squares problems of the form~\eqref{eq:ls}. Here we briefly describe a representative sample of such methods including the use of QR factorization with pivoting, the use of the singular value decomposition (SVD) and iterative methods such as Krylov subspace methods applied on the normal equations~\cite{book:saad}. LAPACK provides robust implementations of the first two methods; DGELSY uses QR factorization with pivoting and DGELSD uses the singular value decomposition~\cite{LAPACK}. For the iterative methods, LSQR is equivalent to applying the conjugate gradient method on the normal equations~\cite{PS82} and it is a robust and numerically stable method.
\paragraph{Randomized algorithms}
To the best of our knowledge, most randomized algorithms proposed in the theoretical computer science literature for approximately solving least squares are mainly based on the following generic two step procedure: first randomly (and efficiently) project the linear system into sufficiently many dimensions, and second return the solution of the down-sampled linear system as an approximation to the original optimal solution~\cite{petrosSODA06,Sarlos,CW09,Nguyen09,MZ11,fasterLS}, see also~\cite{ls:nnzA}. Concentration of measure arguments imply that the optimal solution of the down-sampled system is close to the optimal solution of the original system. The accuracy of the approximate solution using this approach depends on the sample size and to achieve relative accuracy $\eps$, the sample size should depend inverse polynomially on $\eps$. This makes these approaches unsuitable for the high-precision regime of error that is considered here.

A different approach is the so called randomized preconditioning method, see~\cite{RT08,AMT10}. The authors of~\cite{AMT10} implemented Blendenpik, a high-precision least squares solver. Blendenpik consists of two steps. In the first step, the input matrix is randomly projected and an effective preconditioning matrix is extracted from the projected matrix. In the second step, an iterative least squares solver such as the LSQR algorithm of Paige and Saunders~\cite{PS82} is applied on the preconditioned system. Blendenpik is effective for overdetermined and underdetermined problems.

A parallel iterative least squares solver based on normal random projections called LSRN was recently implemented by Meng, Saunders and Mahoney~\cite{lsrn}. LSRN consists of two phases. In the first preconditioning phase, the original system is projected using random normal projection from which a preconditioner is extracted. In the second step, an iterative method such as LSQR or the Chebyshev semi-iterative method~\cite{Chebyshev} is applied on the preconditioned system. This approach is also effective for over-determined and under-determined least squares problems assuming the existence of a parallel computational environment.
%
\subsection{Relation with our contribution}
%
In Section~\ref{sec:impl}, we compare the randomized extended Kaczmarz algorithm against DGELSY, DGELSD, Blendenpik. LSRN~\cite{lsrn} did not perform well under a setup in which no parallelization is allowed, so we do not include LSRN's performance. The numerical evaluation of Section~\ref{sec:impl} indicates that the randomized extended Kaczmarz 
is effective on the case of sparse, well-conditioned and strongly rectangular (both overdetermined and underdetermined) least squares problems, see~Figure~\ref{fig:sparse}. Moreover, the randomized extended Kaczmarz algorithm has also comparable performance with LAPACK's routine for the dense random input matrices, see~Figure~\ref{fig:dense} (notice that the proposed algorithm almost matches Blendenpik's performance for the underdetermined case, see Figure~\ref{fig:denseUnder}). On the other hand, a preconditioned version of the proposed algorithm does not perform well under the case of ill-conditioned matrices, see Figure~\ref{fig:cond}.
%
%
\section{Background}\label{sec:back}
%
%
\paragraph{Preliminaries and Notation} For an integer $m$, let $[m]:=\{1,\ldots,m\}$. Throughout the paper all vectors are assumed to be column vectors. We denote the rows and columns of $\matA$ by $\ar{1}, \ldots , \ar{m}$ and $\ac{1},\ldots , \ac{n}$, respectively (both viewed as column vectors). $\colspan{\matA}$ denotes the column space of $\matA$, i.e., $\colspan{\matA}:=\{\matA \x\ | \ \x\in\RR^n\}$ and $\colspan{\matA}^{\bot}$ denotes the orthogonal complement of $\colspan{\matA}$. Given any $\b\in\RR^m$, we can uniquely write it as $\br + \bc$, where $\br$ is the projection of $\b$ onto $\colspan{\matA}$. $\frobnorm{\matA}:=\sqrt{\sum_{i=1}^{m}\sum_{j=1}^{n}|a_{ij}|^2}$ and $\norm{\matA}:=\max_{\x \neq \zeromtx } \norm{\matA \x }/ \norm{\x}$ denotes the Frobenius norm and spectral norm, respectively. Let $\sigma_1\geq\sigma_2\geq\ldots \geq\sigma_{\rank{\matA}}$ be the non-zero singular values of $\matA$. We will usually refer to $\sigma_1$ and $\sigma_{\rank{\matA}}$ as $\sigma_{\max}$ and $\sigma_{\min}$, respectively. The Moore-Pensore pseudo-inverse of $\matA$ is denoted by $\pinv{\matA}$~\cite{book:GVL}. Recall that $\norm{\pinv{\matA}} = 1/\sigma_{\min}$.  
For any non-zero real matrix $\matA$, we define
\begin{equation}\label{eq:kappa}
\kappaFS(\matA) := \frobnorm{\matA}^2 \norm{\pinv{\matA}}^2.
\end{equation}
Related to this is the scaled square condition number introduced by Demmel in~\cite{cond:Demmel}, see also~\cite{RK}. It is easy to check that the above parameter $\kappaFS(\matA)$ is related with the condition number of $\matA$, $\cond{\matA}:= \sigma^2_{\max} / \sigma^2_{\min}$, via the inequalities: $\cond{\matA} \leq \kappaFS(\matA) \leq \rank{\matA} \cdot \cond{\matA}$. We denote by $\nnz{\cdot}$ the number of non-zero entries of its argument matrix. We define the \emph{average row sparsity} and \emph{average column sparsity} of $\matA$ by $\ravg$ and $\cavg$, respectively, as follows:
\[	\ravg := \sum_{i=1}^{m} q_i \nnz{\ar{i}}\quad\text{and}\quad \cavg := \sum_{j=1}^{n} p_{j} \nnz{\ac{j}}\]
where $p_j := \norm{\ac{j}}^2 / \frobnorm{\matA}^2$ for every $i\in{[n]}$ and $q_i := \norm{\ar{i}}^2 / \frobnorm{\matA}^2$ for every $i\in{[m]}$. The following fact will be used extensively in the paper.
%
\begin{fact}\label{fact:xls}
Let $\matA$ be any non-zero real $m\times n$ matrix and $\b\in\RR^m$. Denote by $\xls:= \pinv{\matA}\b$. Then $\xls = \pinv{\matA}\br$.
\end{fact}
%
We frequently use the inequality $1-t\leq \exp(-t)$ for every $t\leq 1$. We conclude this section by collecting a few basic facts from probability theory that will be frequently used. For any random variable $X$, we denote its expectation by $\EE[X]$ or $\EE X$. If $X$ is a non-negative random variable, Markov's inequality states that $\Prob{ X > t } \leq t^{-1}\EE [X]$. Let $X$ and $Y$ be two random variables, then $\EE[X+Y]=\EE[X] + \EE[Y]$. We will refer to this fact as \emph{linearity of expectation}. Let $\mathcal{E}_1, \mathcal{E}_2, \ldots , \mathcal{E}_l$ be a set of events defined over some probability space holding with probabilities $p_1,p_2,\ldots p_l$ respectively, then $\Prob{\mathcal{E}_1\cup \mathcal{E}_2\cup \ldots \cup \mathcal{E}_l} \leq \sum_{i=1}^{l}p_i$. We refer to this fact as \emph{union bound}.
%
\subsection{Randomized Approximate Orthogonal Projection}
%
%
\begin{algorithm}{}
	\caption{Randomized Orthogonal Projection}\label{alg:randOP}
\begin{algorithmic}[1]
\Procedure{}{$\matA$, $\b$, $T$}\Comment{$\matA\in\RR^{m\times n}, \b\in\RR^m$, $T\in \NN$}
\State Initialize $\z^{(0)} =\b$ 
\For {$k=0,1,2,\ldots, T - 1 $ }
	\State Pick $j_k\in[n]$ with probability $p_j:=\norm{\ac{j}}^2/\frobnorm{\matA}^2,\ j\in [n]$
	\State Set $ \z^{(k+1)} = \left(\Id_m - \frac{\ac{j_k} \ac{j_k}^\top }{\norm{\ac{j_k}}^2}\right) \z^{(k)}$
\EndFor
\State Output $\z^{(T)}$
\EndProcedure
\end{algorithmic}
\end{algorithm}
In this section we present a randomized iterative algorithm (Algorithm~\ref{alg:randOP}) that, given any vector $\b\in\RR^m$ and a linear subspace of $\RR^m$ represented as the column space of a given matrix $\matA$, approximately computes the orthogonal projection of $\b$ onto the column space of $\matA$ (denoted by $\br$, $\br=\matA\pinv{\matA}\b$), see~\cite{ROP:CRT11} for a different approach.
Algorithm~\ref{alg:randOP} is iterative. Initially, it starts with $\z^{(0)}=\b$. At the $k$-th iteration, the algorithm randomly selects a column $\ac{j}$ of $\matA$ for some $j$, and updates $\z^{(k)}$ by projecting it onto the orthogonal complement of the space of $\ac{j}$. The claim is that randomly selecting the columns of $\matA$ with probability proportional to their square norms implies that the algorithm converges to $\bc$ in expectation. After $T$ iterations, the algorithm outputs $\z^{(T)}$ and by orthogonality $\b-\z^{(T)}$ serves as an approximation for $\br$. The next theorem bounds the expected rate of convergence for Algorithm~\ref{alg:randOP}.
\begin{theorem}\label{thm:randOP}
Let $\matA\in\RR^{m\times n}$, $\b\in\RR^m$ and $T>1$ be the input to Algorithm~\ref{alg:randOP}. Fix any integer $k>0$. In exact arithmetic, after $k$ iterations of Algorithm~\ref{alg:randOP} it holds that 
\[\EE \norm{\z^{(k)} - \bc }^2 \leq \left(1 -\frac1{\kappaFS(\matA)}\right)^k  \norm{\br}^2.\]
Moreover, each iteration of Algorithm~\ref{alg:randOP} requires in expectation (over the random choices of the algorithm) at most $5\cavg$ arithmetic operations.
\end{theorem}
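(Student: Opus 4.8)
The plan is to track the error vector $\w^{(k)} := \z^{(k)} - \bc$ and to show it contracts in expectation by a factor $1 - 1/\kappaFS(\matA)$ at each step. First I would observe that $\bc \in \colspan{\matA}^\bot$ is a fixed point of every projection used by the algorithm: since $\ac{j}^\top\bc = 0$ for all $j$, we have $(\Id_m - \ac{j}\ac{j}^\top/\norm{\ac{j}}^2)\bc = \bc$. Subtracting this identity from the update rule shows that $\w^{(k)}$ obeys the very same projection recursion, $\w^{(k+1)} = (\Id_m - \ac{j_k}\ac{j_k}^\top/\norm{\ac{j_k}}^2)\w^{(k)}$, now with the deterministic initial condition $\w^{(0)} = \b - \bc = \br$.

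Next I would use that $P_j := \Id_m - \ac{j}\ac{j}^\top/\norm{\ac{j}}^2$ is an orthogonal projection, so the Pythagorean identity gives the exact one-step decrease $\norm{\w^{(k+1)}}^2 = \norm{\w^{(k)}}^2 - (\ac{j_k}^\top\w^{(k)})^2/\norm{\ac{j_k}}^2$. Taking the conditional expectation over the random column index $j_k$, whose sampling probability $p_j = \norm{\ac{j}}^2/\frobnorm{\matA}^2$ is precisely designed to cancel the $\norm{\ac{j}}^2$ denominator, I would obtain $\EE[\norm{\w^{(k+1)}}^2 \mid \w^{(k)}] = \norm{\w^{(k)}}^2 - \norm{\matA^\top\w^{(k)}}^2/\frobnorm{\matA}^2$, using $\sum_j (\ac{j}^\top\w^{(k)})^2 = \norm{\matA^\top\w^{(k)}}^2$.

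The main obstacle is lower-bounding $\norm{\matA^\top\w^{(k)}}^2$ by $\sigma_{\min}^2\norm{\w^{(k)}}^2$; this bound fails for a general vector, because any component lying in $\colspan{\matA}^\bot = \ker(\matA^\top)$ is annihilated by $\matA^\top$. The crux is therefore the invariant $\w^{(k)} \in \colspan{\matA}$, which I would establish by induction: $\w^{(0)} = \br \in \colspan{\matA}$, and each step subtracts a multiple of $\ac{j_k} \in \colspan{\matA}$, so $\w^{(k)}$ never leaves the column space. With the invariant in hand, the SVD bound $\norm{\matA^\top\w}^2 \geq \sigma_{\min}^2\norm{\w}^2$ valid for $\w \in \colspan{\matA}$, combined with the identity $\sigma_{\min}^2/\frobnorm{\matA}^2 = 1/\kappaFS(\matA)$, yields $\EE[\norm{\w^{(k+1)}}^2 \mid \w^{(k)}] \leq (1 - 1/\kappaFS(\matA))\norm{\w^{(k)}}^2$. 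Taking full expectations and unrolling the recursion $k$ times gives the claimed bound, since $\norm{\w^{(0)}}^2 = \norm{\br}^2$ is deterministic.

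For the arithmetic cost, I would count the operations needed to form $\z^{(k+1)} = \z^{(k)} - (\ac{j_k}^\top\z^{(k)}/\norm{\ac{j_k}}^2)\ac{j_k}$, assuming the column norms $\norm{\ac{j}}^2$ have been precomputed (they are needed for the sampling probabilities in any case). The inner product $\ac{j_k}^\top\z^{(k)}$ and the scaled subtraction each touch only the nonzero entries of $\ac{j_k}$, costing about $2\nnz{\ac{j_k}}$ operations apiece, plus one division, for a total of at most $5\nnz{\ac{j_k}}$ operations. Averaging over the sampling distribution then gives $\sum_{j=1}^{n} p_j \cdot 5\nnz{\ac{j}} = 5\cavg$ by the definition of $\cavg$, completing the proof.
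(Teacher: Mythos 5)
Your proposal is correct and follows essentially the same route as the paper: track the error $\z^{(k)}-\bc$, use that $\bc$ is fixed by every projection so the error obeys the same recursion starting from $\br$, maintain the invariant that the error stays in $\colspan{\matA}$, and exploit the norm-proportional sampling to obtain the one-step contraction by $1-1/\kappaFS(\matA)$, with the identical $5\nnz{\ac{j_k}}$ per-iteration cost count. The only cosmetic difference is that you compute the conditional expectation exactly via the Pythagorean identity and then bound $\norm{\matA^\top \w^{(k)}}^2 \geq \sigma_{\min}^2\norm{\w^{(k)}}^2$, whereas the paper passes through $\EE[\matP(X)] = \Id_m - \matA\matA^\top/\frobnorm{\matA}^2$ and applies Cauchy--Schwarz together with its Fact on that matrix; both yield the same bound.
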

\begin{remark}
A suggestion for a stopping criterion for Algorithm~\ref{alg:randOP} is to regularly check: $ \frac{\norm{\matA^\top \z^{(k)}}}{\frobnorm{\matA} \norm{\z^{(k)}}} \leq \eps$ for some given accuracy $\eps>0$. It is easy to see that whenever this criterion is satisfied, it holds that $\norm{\bc-\z^{(k)} } / \norm{\z^{(k)}} \leq \eps \kappaF(\matA)$, i.e., $\b-\z^{(k)}\approx \br$.
\end{remark}
We devote the rest of this subsection to prove Theorem~\ref{thm:randOP}. Define $\matP (j):= \Id_m - \frac{\ac{j} \ac{j}^\top}{\norm{\ac{j}}^2}$ for every $j\in [n]$. Observe that $\matP (j) \matP (j) = \matP (j)$, i.e., $\matP (j)$ is a projector matrix. Let $X$ be a random variable over $\{1,2,\ldots, n\}$ that picks index $j$ with probability $\norm{\ac{j}}^2/\frobnorm{\matA}^2$. It is clear that $\EE [\matP(X)] = \Id_m - \matA\matA^\top /\frobnorm{\matA}^2$. Later we will make use of the following fact.
\begin{fact}\label{lem:technical}
For every vector $\vecu$ in the column space of $\matA$, it holds $\norm{\left(\Id_m - \frac{\matA\matA^\top }{\frobnorm{\matA}^2}\right) \vecu} \leq \left(1 - \frac{\sigma^2_{\min}}{\frobnorm{\matA}^2} \right) \norm{\vecu}$.
\end{fact}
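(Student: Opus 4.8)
The plan is to exploit the fact that the matrix $M := \Id_m - \frac{\matA\matA^\top}{\frobnorm{\matA}^2}$ is symmetric, so its behavior is completely governed by its eigenvalues, and then to restrict attention to the column space of $\matA$ where $\vecu$ lives. First I would invoke the singular value decomposition $\matA = \matU\matSig\matV^\top$, so that $\matA\matA^\top = \matU\matSig^2\matU^\top$ and hence $M = \matU\left(\Id - \matSig^2/\frobnorm{\matA}^2\right)\matU^\top$ on $\colspan{\matA}$, with $M$ acting as the identity on $\colspan{\matA}^\bot$. The eigenvalues of $M$ associated with the left singular vectors of nonzero singular value are exactly $1 - \sigma_i^2/\frobnorm{\matA}^2$ for $i = 1,\ldots,\rank{\matA}$.

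Next I would decompose $\vecu \in \colspan{\matA}$ in the orthonormal basis of left singular vectors corresponding to the nonzero singular values, say $\vecu = \sum_i c_i \vecu_i$. Then $M\vecu = \sum_i \left(1 - \sigma_i^2/\frobnorm{\matA}^2\right) c_i \vecu_i$, and by the Pythagorean identity
\[\norm{M\vecu}^2 = \sum_i \left(1 - \frac{\sigma_i^2}{\frobnorm{\matA}^2}\right)^2 c_i^2.\]
The key quantitative step is to control the factors $1 - \sigma_i^2/\frobnorm{\matA}^2$ uniformly. Since $\frobnorm{\matA}^2 = \sum_i \sigma_i^2 \geq \sigma_{\max}^2 \geq \sigma_i^2$, every factor lies in $[0,1]$; in particular each is nonnegative, so its largest value over the index set is attained at the smallest singular value $\sigma_{\min}$, giving $0 \leq 1 - \sigma_i^2/\frobnorm{\matA}^2 \leq 1 - \sigma_{\min}^2/\frobnorm{\matA}^2$ for all relevant $i$.

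Finally I would substitute this bound into the sum to obtain $\norm{M\vecu}^2 \leq \left(1 - \sigma_{\min}^2/\frobnorm{\matA}^2\right)^2 \sum_i c_i^2 = \left(1 - \sigma_{\min}^2/\frobnorm{\matA}^2\right)^2 \norm{\vecu}^2$, and taking square roots yields the claim. The only subtle point — the step I would flag as the main thing to get right — is the observation $\frobnorm{\matA}^2 \geq \sigma_{\max}^2$, which guarantees the factors are nonnegative so that the \emph{smallest} nonzero singular value, rather than the largest, determines the contraction constant; without this the supremum over the eigenvalues of $M$ on $\colspan{\matA}$ would not simplify correctly. Everything else is a routine application of the spectral decomposition of a symmetric matrix restricted to an invariant subspace.
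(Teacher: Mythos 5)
Your proof is correct and takes essentially the same route as the paper's: the paper likewise diagonalizes $\matA\matA^\top$, writes $\vecu\in\colspan{\matA}$ in the eigenbasis corresponding to the nonzero eigenvalues $\sigma_i^2$, and bounds each factor $1-\sigma_i^2/\frobnorm{\matA}^2$ uniformly by $1-\sigma^2_{\min}/\frobnorm{\matA}^2$. The nonnegativity observation you flag (each $\sigma_i^2\leq\sigma_{\max}^2\leq\frobnorm{\matA}^2$) is exactly what justifies that uniform bound, so your write-up is a faithful, slightly more detailed version of the paper's argument.
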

\ignore{
\begin{proof}
	Let $\matA \matA^\top = \sum_i {\sigma_i^2 \vecv_i\vecv_i^\top }$ be the eigenvalue decomposition of $\matA\matA^\top$. Write $\vecu$ in this eigenbasis as $\vecu = \sum_i{\beta_i \vecv_i}$. Then $\norm{\left(\Id_m - \frac{\matA\matA^\top }{\frobnorm{\matA}^2}\right) \vecu}^2 = \norm{\sum_i{(1-\sigma_i^2/ \frobnorm{\matA}^2  ) \beta_i \vecv_i}}^2 \leq \left(1-\sigma^2_{\min}/ \frobnorm{\matA}^2  \right)^2 \norm{\vecu}^2$.
\end{proof}
}
Define $\e^{(k)}:= \z^{(k)} - \bc$ for every $k\geq 0$. A direct calculation implies that 
\[\e^{(k)} = \matP (j_k) \e^{(k-1)}.\]
Indeed, $\e^{(k)} = \z^{(k)} - \bc = \matP(j_k) \z^{(k-1)} - \bc = \matP(j_k) (\e^{(k-1)} + \bc ) - \bc = \matP(j_k) \e^{(k-1)}$ using the definitions of $\e^{(k)}$, $\z^{(k)}$, $\e^{(k-1)}$ and the fact that $\matP (j_k) \bc = \bc$ for any $j_k\in{[n]}$. Moreover, it is easy to see that for every $k\geq0$ $\e^{(k)}$ is in the column space of $\matA$, since $\e^{(0)} = \b - \bc = \br\in \colspan{\matA} $, $\e^{(k)}= \matP (j_k) \e^{(k-1)}$ and in addition $\matP(j_k)$ is a projector matrix for every $j_k\in [n]$.

Let $X_1,X_2,\ldots $ be a sequence of independent and identically distributed random variables distributed as $X$. For ease of notation, we denote by $\EE_{k-1}[\cdot] = \EE_{X_k} [\cdot\ |\ X_1, X_2, \ldots, X_{k-1}]$, i.e., the conditional expectation conditioned on the first $(k-1)$ iteration of the algorithm. It follows that
\begin{align*}
	\EE_{k-1} \norm{ \e^{(k)}}^2 &  =   \EE_{k-1} \norm{ \matP (X_k) \e^{(k-1)} }^2 \ = \ \EE_{k-1} \ip{ \matP (X_k) \e^{(k-1)} }{ \matP (X_k) \e^{(k-1)} } \\
							 &  =   \EE_{k-1} \ip{\e^{(k-1)} }{ \matP (X_k)\matP (X_k) \e^{(k-1)} } \ = \  \ip{\e^{(k-1)} }{ \EE_{k-1} [\matP (X_k)] \e^{(k-1)} } \\
							 &\leq  \norm{\e^{(k-1)}} \norm{ \left(\Id_m - \frac{\matA\matA^\top }{\frobnorm{\matA}^2}\right) \e^{(k-1)} } 
							 \ \leq \ \left(1 - \frac{\sigma^2_{\min}}{\frobnorm{\matA}^2}\right) \norm{\e^{(k-1)}}^2
\end{align*}
where we used linearity of expectation, the fact that $\matP (\cdot)$ is a projector matrix, Cauchy-Schwarz inequality and Fact~\ref{lem:technical}. Repeating the same argument $k-1$ times we get that 
\[\EE\norm{ \e^{(k)}}^2 \leq \left(1 -  \frac1{\kappaFS(\matA)}\right)^k \norm{\e^{(0)}}^2.\]
Note that $\e^{(0)} = \b - \bc = \br$ to conclude. 
%

%
Step $5$ can be rewritten as $\z^{(k+1)} = \z^{(k)} - \left(\ip{\ac{j_k}}{\z^{(k)}} / \norm{\ac{j_k}}^2\right) \ac{j_k}$. At every iteration, the inner product and the update from $\z^{(k)}$ to $\z^{(k+1)}$ require at most $5\nnz{\ac{j_k}}$ operations for some $j_k\in{[n]}$; hence in expectation each iteration requires at most $\sum_{j=1}^{n} 5 p_j \nnz{\ac{j}} = 5\cavg$ operations.
%

%
\subsection{Randomized Kaczmarz}\label{sec:RK}
%
%
\begin{algorithm}{}
	\caption{Randomized Kaczmarz~\cite{RK}}\label{alg:randomized}
\begin{algorithmic}[1]
\Procedure{}{$\matA$, $\b$, $T$}\Comment{$\matA\in\RR^{m\times n}, \b\in\RR^m$}
\State Set $\x^{(0)}$ to be any vector in the row space of $\matA$
\For {$k=0,1,2,\ldots , T-1$ }
	\State Pick $i_k\in[m]$ with probability $q_i:=\norm{\ar{i}}^2/\frobnorm{\matA}^2, i\in [m]$
	\State Set $ \x^{(k+1)} = \x^{(k)}  + \frac{b_{i_k} - \ip{\x^{(k)}}{\ar{i_k} }}{\norm{\ar{i_k}}^2} \ar{i_k}$
\EndFor
\State Output $\x^{(T)}$
\EndProcedure
\end{algorithmic}
\end{algorithm}
%
%
%
%
Strohmer and Vershynin proposed the following randomized variant of Kaczmarz algorithm~(Algorithm~\ref{alg:randomized}), see~\cite{RK} for more details. The following theorem is a restatement of the main result of~\cite{RK} without imposing the full column rank assumption.
\begin{theorem}\label{thm:RK:consistent}
Let $\matA\in\RR^{m\times n}$, $\b\in\RR^m$ and $T>1$ be the input to Algorithm~\ref{alg:randomized}. Assume that $\matA \x = \b$ has a solution and denote $\xls:=\pinv{\matA}\b$. In exact arithmetic, Algorithm~\ref{alg:randomized} converges to $\xls$ in expectation:
	\begin{equation}
		\EE \norm{\x^{(k)} - \xls}^2 \leq \left(1 - \frac1{\kappaFS(\matA)}\right)^k \norm{\x^{(0)} - \xls}^2\quad \forall\ k>0.
	\end{equation}
\end{theorem}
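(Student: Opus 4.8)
The plan is to mirror the convergence argument already carried out for Algorithm~\ref{alg:randOP}, but now working in the row space of $\matA$ with the error vector $\e^{(k)} := \x^{(k)} - \xls$. The first step is to derive a clean recursion for this error. Since $\matA\x = \b$ is solvable, we have $\ip{\ar{i_k}}{\xls} = b_{i_k}$ for every row $i_k$, so the update in Step~5 can be rewritten purely in terms of $\e^{(k)}$. Substituting $b_{i_k} = \ip{\ar{i_k}}{\xls}$ into $\x^{(k+1)} = \x^{(k)} + \frac{b_{i_k} - \ip{\x^{(k)}}{\ar{i_k}}}{\norm{\ar{i_k}}^2}\ar{i_k}$ and subtracting $\xls$ from both sides, I expect to obtain
\[\e^{(k+1)} = \left(\Id_n - \frac{\ar{i_k}\ar{i_k}^\top}{\norm{\ar{i_k}}^2}\right)\e^{(k)},\]
so each step applies the orthogonal projector onto the hyperplane orthogonal to $\ar{i_k}$. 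This is exactly the row-space analogue of the column projector $\matP(j)$ from the previous subsection.

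Next I would take conditional expectations. Writing $\EE_{k-1}[\cdot]$ for the expectation over the choice of $i_k$ conditioned on the first $k-1$ iterations, and using that the projector is idempotent and symmetric (so $\norm{\matP\e}^2 = \ip{\e}{\matP\e}$), I get
\[\EE_{k-1}\norm{\e^{(k+1)}}^2 = \ip{\e^{(k)}}{\EE_{k-1}\!\left[\Id_n - \frac{\ar{i_k}\ar{i_k}^\top}{\norm{\ar{i_k}}^2}\right]\e^{(k)}}.\]
Because the rows are sampled with probability $q_i = \norm{\ar{i}}^2/\frobnorm{\matA}^2$, the norm factors cancel and the expected matrix becomes $\Id_n - \matA^\top\matA/\frobnorm{\matA}^2$. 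The crucial point is that $\e^{(k)}$ stays in the row space of $\matA$ for all $k$: the initial $\x^{(0)}$ is chosen in the row space, $\xls = \pinv{\matA}\b$ lies in the row space, and each projector maps the row space into itself. Restricted to the row space, $\matA^\top\matA$ has smallest eigenvalue $\sigma^2_{\min}$, which yields the contraction factor $1 - \sigma^2_{\min}/\frobnorm{\matA}^2 = 1 - 1/\kappaFS(\matA)$ per step. Iterating the bound $k$ times and using $\EE\norm{\e^{(k)}}^2 = \EE[\EE_{k-1}\norm{\e^{(k)}}^2]$ via the tower property gives the stated geometric decay.

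The step I expect to be the main obstacle, relative to the earlier theorem, is handling the fact that the full column rank assumption is dropped. In the previous subsection Fact~\ref{lem:technical} was stated for vectors in the column space and fed a vector that provably stayed there; here I must argue the symmetric statement in the row space and, more delicately, identify $\xls = \pinv{\matA}\b$ as the correct target even when $\matA$ is rank-deficient. The key subtlety is that when $\matA\x=\b$ is solvable the solution set is an affine subspace, and I must verify that the row-space component of the error is the only part that matters, i.e.\ that $\e^{(0)}$ and hence every $\e^{(k)}$ has no component orthogonal to the row space. This is precisely what lets me replace the spectral action of $\matA^\top\matA$ by its smallest \emph{nonzero} singular value $\sigma^2_{\min}$ rather than $0$; without the row-space confinement the contraction would fail. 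Once this invariant is established, the remaining algebra is routine and parallels the projection theorem line for line.
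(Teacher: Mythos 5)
Your proposal is correct and follows essentially the same route as the paper: the identity $\EE_{k-1}\norm{\e^{(k+1)}}^2 = \norm{\e^{(k)}}^2 - \norm{\matA\e^{(k)}}^2/\frobnorm{\matA}^2$ you obtain from the projector's idempotence is exactly what the paper gets by combining its Orthogonality lemma (Lemma~\ref{lem:ortho}) with the computation $\EE_Z\norm{\x^{(k+1)}-\x^{(k)}}^2 = \norm{\matA(\xls-\x^{(k)})}^2/\frobnorm{\matA}^2$ in Lemma~\ref{lem:avg}, and both arguments then invoke the row-space confinement of $\x^{(k)}$ and $\xls$ to replace $\norm{\matA\e^{(k)}}$ by $\sigma_{\min}\norm{\e^{(k)}}$. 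The subtlety you flag for the rank-deficient case is precisely the one the paper addresses in the last lines of the proof of Lemma~\ref{lem:avg}.
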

\begin{remark}
The above theorem has been proved in~\cite{RK} for the case of full column rank. Also, the rate of expected convergence in~\cite{RK} is $1-1/\widetilde{\kappa}^2 (\matA)$ where $\widetilde{\kappa}^2(\matA) := \frobnorm{\matA}^2 / \sigma_{\min{(m,n)}}(\matA^\top \matA)$. Notice that if $\rank{\matA}< n$, then $\widetilde{\kappa}^2(\matA)$ is infinite whereas $\kappaFS(\matA)$ is bounded.
\end{remark}
%

%
We devote the rest of this subsection to prove Theorem~\ref{thm:RK:consistent} following~\cite{RK}. The proof is based on the following two elementary lemmas which both appeared in~\cite{RK}. However, in our setting, the second lemma is not identical to that in~\cite{RK}. We deferred their proofs to the Appendix.
\begin{lemma}[Orthogonality]\label{lem:ortho}
Assume that $\matA\x= \b$ has a solution and use the notation of Algorithm~\ref{alg:randomized}, then $\x^{(k+1)} -\xls$ is perpendicular to $\x^{(k+1)} - \x^{(k)}$ for any $k\geq 0$. In particular, in exact arithmetic it holds that $\norm{\x^{(k+1)} - \xls}^2 = \norm{\x^{(k)} - \xls}^2 - \norm{\x^{(k+1)} - \x^{(k)}}^2$.
\end{lemma}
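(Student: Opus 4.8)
The plan is to exploit the geometric fact that each Kaczmarz step orthogonally projects onto a single constraint hyperplane, so the displacement $\x^{(k+1)}-\x^{(k)}$ is parallel to the selected row $\ar{i_k}$, while the new error $\x^{(k+1)}-\xls$ is forced to be perpendicular to that row. The conclusion is then just the Pythagorean theorem. I would organize the argument in three short steps.

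First I would verify that the updated iterate exactly satisfies the selected constraint, namely $\ip{\ar{i_k}}{\x^{(k+1)}} = b_{i_k}$. This is a direct computation: substituting the update rule of Step~5 of Algorithm~\ref{alg:randomized} gives
\[
\ip{\ar{i_k}}{\x^{(k+1)}} = \ip{\ar{i_k}}{\x^{(k)}} + \frac{b_{i_k}-\ip{\x^{(k)}}{\ar{i_k}}}{\norm{\ar{i_k}}^2}\ip{\ar{i_k}}{\ar{i_k}} = b_{i_k}.
\]
Next I need that $\xls$ obeys the same constraint, i.e. $\ip{\ar{i_k}}{\xls} = b_{i_k}$. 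This is exactly where the solvability hypothesis enters: if $\matA\x=\b$ has a solution then $\b\in\colspan{\matA}$, so $\bc=\zeromtx$ and $\br=\b$; combining with Fact~\ref{fact:xls} yields $\matA\xls = \matA\pinv{\matA}\br = \br = \b$, whence the $i_k$-th coordinate reads $\ip{\ar{i_k}}{\xls}=b_{i_k}$.

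Second, subtracting the two relations gives $\ip{\ar{i_k}}{\x^{(k+1)}-\xls} = b_{i_k}-b_{i_k} = 0$, so $\x^{(k+1)}-\xls$ is orthogonal to $\ar{i_k}$. Since the update rule shows $\x^{(k+1)}-\x^{(k)}$ is a scalar multiple of $\ar{i_k}$, it follows at once that $\x^{(k+1)}-\xls \perp \x^{(k+1)}-\x^{(k)}$, which is the first assertion. Third, I would write $\x^{(k)}-\xls = (\x^{(k)}-\x^{(k+1)}) + (\x^{(k+1)}-\xls)$, observe that the two summands are orthogonal by the just-established perpendicularity, and apply the Pythagorean identity to obtain $\norm{\x^{(k)}-\xls}^2 = \norm{\x^{(k+1)}-\x^{(k)}}^2 + \norm{\x^{(k+1)}-\xls}^2$, which rearranges to the claimed equality.

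The proof is essentially a routine calculation, so there is no serious analytic obstacle; the only point requiring care is the step that $\xls$ satisfies every constraint $\ip{\ar{i_k}}{\xls}=b_{i_k}$. This is precisely why the solvability assumption is stated in the lemma, and it is the one place where the argument would fail for inconsistent systems (in which case $\br\neq\b$ and $\matA\xls$ need not equal $\b$). Everything else is the observation that a single Kaczmarz step is an orthogonal projection onto a hyperplane containing $\xls$, together with the Pythagorean theorem.
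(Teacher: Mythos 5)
Your proposal is correct and follows essentially the same route as the paper: both arguments reduce the claim to showing that $\x^{(k+1)}$ and $\xls$ each satisfy the selected constraint $\ip{\ar{i_k}}{\cdot}=b_{i_k}$, so that the error $\x^{(k+1)}-\xls$ is orthogonal to $\ar{i_k}$ and hence to the update direction, with the norm identity following from the Pythagorean theorem. Your explicit justification that $\matA\xls=\b$ under the solvability hypothesis is a slightly more detailed version of the paper's one-line remark that $\xls$ is a solution of $\matA\x=\b$, but the substance is identical.
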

%

%
The above lemma provides a formula for the error at each iteration. Ideally, we seek to minimize the error at each iteration which is equivalent to maximizing $\norm{\x^{(k+1)} - \x^{(k)}}$ over the choice of the row projections of the algorithm. The next lemma suggests that by randomly picking the rows of $\matA$ reduces the error in expectation.
\begin{lemma}[Expected Error Reduction]\label{lem:avg}
Assume that $\matA\x=\b$ has a solution. Let $Z$ be a random variable over $[m]$ with distribution $\Prob{Z=i} = \frac{\norm{\ar{i}}^2}{\frobnorm{\matA}^2}$ and assume that $\x^{(k)}$ is a vector in the row space of $\matA$. If $\x^{(k+1)} := \x^{(k)} + \frac{b_Z - \ip{\x^{(k)}}{\ar{Z}}}{\norm{\ar{Z}}^2} \ar{Z}$ (in exact arithmetic), then
\begin{equation}
\EE_{Z}\norm{\x^{(k+1)} - \xls}^2 \leq \left(1 - \frac1{\kappaFS(\matA)}\right) \norm{\x^{(k)} - \xls}^2.
\end{equation}
\end{lemma}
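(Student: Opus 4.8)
The plan is to combine the exact per-step error decomposition from Lemma~\ref{lem:ortho} with a direct computation of the expected squared step length, and then to exploit that the error vector stays in the row space of $\matA$ in order to turn a spectral estimate into the claimed contraction factor. First I would apply the orthogonality identity, which for every realization of $Z$ gives $\norm{\x^{(k+1)} - \xls}^2 = \norm{\x^{(k)} - \xls}^2 - \norm{\x^{(k+1)} - \x^{(k)}}^2$. Taking expectation over $Z$ and using linearity of expectation, it then suffices to lower bound $\EE_Z\norm{\x^{(k+1)} - \x^{(k)}}^2$ by $\frac{1}{\kappaFS(\matA)}\norm{\x^{(k)} - \xls}^2$.

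For the expected step length I would substitute the update rule $\x^{(k+1)} - \x^{(k)} = \frac{b_Z - \ip{\x^{(k)}}{\ar{Z}}}{\norm{\ar{Z}}^2}\ar{Z}$, giving $\norm{\x^{(k+1)} - \x^{(k)}}^2 = (b_Z - \ip{\x^{(k)}}{\ar{Z}})^2/\norm{\ar{Z}}^2$. Averaging against the sampling probabilities $q_i = \norm{\ar{i}}^2/\frobnorm{\matA}^2$ cancels the row-norm factors and leaves $\EE_Z\norm{\x^{(k+1)} - \x^{(k)}}^2 = \frobnorm{\matA}^{-2}\sum_{i=1}^m (b_i - \ip{\x^{(k)}}{\ar{i}})^2 = \frobnorm{\matA}^{-2}\norm{\b - \matA\x^{(k)}}^2$. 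Since the system is solvable we have $\matA\xls = \matA\pinv{\matA}\b = \b$, so this quantity equals $\frobnorm{\matA}^{-2}\norm{\matA(\x^{(k)} - \xls)}^2$.

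The main obstacle, and the only place this rank-deficient analysis departs from the original argument in~\cite{RK}, is lower bounding $\norm{\matA(\x^{(k)} - \xls)}$ by $\sigma_{\min}\norm{\x^{(k)} - \xls}$. This inequality fails for arbitrary vectors once $\rank{\matA} < n$, so the hypotheses must do the work: $\x^{(k)}$ lies in the row space of $\matA$ by assumption, and $\xls = \pinv{\matA}\b$ lies in the row space because $\pinv{\matA}$ maps into $\colspan{\matA^\top}$; hence $\x^{(k)} - \xls \in \colspan{\matA^\top}$. Restricted to the row space, $\matA$ stretches every vector by at least its smallest nonzero singular value, so $\norm{\matA(\x^{(k)} - \xls)}^2 \geq \sigma_{\min}^2\norm{\x^{(k)} - \xls}^2$. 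Combining this with $\kappaFS(\matA) = \frobnorm{\matA}^2/\sigma_{\min}^2$ gives the desired lower bound on the expected step, and inserting it into the decomposition above yields the contraction factor $1 - 1/\kappaFS(\matA)$. It is worth noting that this row-space membership is exactly what both keeps $\kappaFS(\matA)$ finite and makes the argument go through without the full column rank assumption.
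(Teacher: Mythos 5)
Your proposal is correct and follows essentially the same route as the paper's proof: reduce via the orthogonality identity of Lemma~\ref{lem:ortho} to lower bounding the expected squared step length, compute that expectation as $\norm{\matA(\xls - \x^{(k)})}^2/\frobnorm{\matA}^2$ (the paper writes $b_i - \ip{\x^{(k)}}{\ar{i}} = \ip{\xls - \x^{(k)}}{\ar{i}}$ directly, which is the same identity you use in the form $\matA\xls = \b$), and then invoke the row-space membership of $\x^{(k)} - \xls$ to get the bound $\norm{\matA(\xls - \x^{(k)})} \geq \sigma_{\min}\norm{\xls - \x^{(k)}}$ without full column rank. Your remark that the row-space hypothesis is precisely what replaces the full-rank assumption of~\cite{RK} matches the paper's intent exactly.
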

%
%
%
Theorem~\ref{thm:RK:consistent} follows by iterating Lemma \ref{lem:avg}, we get that 
\[\EE \norm{\x^{(k+1)} - \xls}^2 \leq \left(1 - \frac1{\kappaFS(\matA)}\right)^k \norm{\x^{(0)} - \xls}^2.\]
%
%
\subsection{Randomized Kaczmarz Applied to Noisy Linear Systems}\label{sec:noisyRK}
The analysis of Strohmer and Vershynin is based on the restrictive assumption that the linear system has a solution. Needell made a step further and analyzed the more general setting in which the linear system does not have any solution and $\matA$ has full column rank~\cite{Needell09}. In this setting, it turns out that the randomized Kaczmarz algorithm computes an estimate vector that is within a fixed distance from the solution; the distance is proportional to the norm of the ``noise vector'' multiplied by $\kappaFS(\matA)$~\cite{Needell09}. The following theorem is a restatement of the main result in~\cite{Needell09} with two modifications: the full column rank assumption on the input matrix is dropped and the additive term $\gamma$ of Theorem~$2.1$ in~\cite{Needell09} is improved to $\norm{\w}^2/\frobnorm{\matA}^2$. The only technical difference here from~\cite{Needell09} is that the full column rank assumption is not necessary, so we defer the proof to the Appendix for completeness.
\begin{theorem}\label{thm:RK:inconsistent}
Assume that the system $\matA \x = \y$ has a solution for some $\y\in\RR^m$. Denote by $\x^{*} := \pinv{\matA}\y$. Let $\hat\x^{(k)}$ denote the $k$-th iterate of the randomized Kaczmarz algorithm applied to the linear system $\matA \x = \b$ with $\b:=\y + \w$ for any fixed $\w\in\RR^m$, i.e., run Algorithm~\ref{alg:randomized} with input $(\matA,\b)$. In exact arithmetic, it follows that
\begin{equation}\label{ineq:relaxRK}
\EE \norm{\hat\x^{(k)} - \x^{*}}^2 \le \left(1-\frac1{\kappaFS(\matA)}\right)\EE \norm{\hat\x^{(k-1)} - \x^{*}}^2 + \frac{\norm{\w}^2}{\frobnorm{\matA}^2}.
\end{equation}
In particular,
\[\EE \norm{\hat\x^{(k)} - \x^{*}}^2 \le \left(1-\frac1{\kappaFS(\matA)}\right)^k \norm{\x^{(0)}- \x^{*}}^2 + \frac{\norm{\w}^2}{\sigma^2_{\min} }.\]
\end{theorem}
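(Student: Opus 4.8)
The plan is to compare the noisy iterate $\hat\x^{(k)}$ with a \emph{phantom} iterate obtained by applying the \emph{same} randomly chosen row projection but using the noiseless right-hand side $\y$, and then to split the error by orthogonality. Concretely, if $i_k$ is the row selected at step $k$, I would introduce
\[\tilde\x^{(k)} := \hat\x^{(k-1)} + \frac{y_{i_k} - \ip{\hat\x^{(k-1)}}{\ar{i_k}}}{\norm{\ar{i_k}}^2}\ar{i_k},\]
which is precisely one randomized Kaczmarz step for the \emph{consistent} system $\matA\x=\y$ started from $\hat\x^{(k-1)}$. Since $b_{i_k}=y_{i_k}+w_{i_k}$, the noisy iterate decomposes exactly as $\hat\x^{(k)}=\tilde\x^{(k)}+\frac{w_{i_k}}{\norm{\ar{i_k}}^2}\ar{i_k}$, so the deviation from the phantom is a scalar multiple of $\ar{i_k}$.

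The second step is to establish orthogonality and apply Pythagoras. I would compute directly that $\ip{\tilde\x^{(k)}}{\ar{i_k}} = y_{i_k}$, and, using that $\y\in\colspan{\matA}$ forces $\matA\x^{*}=\matA\pinv{\matA}\y=\y$ so that $\ip{\x^{*}}{\ar{i_k}}=y_{i_k}$ as well, conclude $\tilde\x^{(k)}-\x^{*}\perp\ar{i_k}$ (equivalently, this is Lemma~\ref{lem:ortho} applied to $\matA\x=\y$). Because $\hat\x^{(k)}-\tilde\x^{(k)}$ is parallel to $\ar{i_k}$, Pythagoras gives
\[\norm{\hat\x^{(k)} - \x^{*}}^2 = \norm{\tilde\x^{(k)} - \x^{*}}^2 + \frac{w_{i_k}^2}{\norm{\ar{i_k}}^2}.\]

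The third step is to take the conditional expectation $\EE_{k-1}$ over the choice of $i_k$. All iterates stay in the row space of $\matA$ (the initial vector does, and each update adds a multiple of a row), so Lemma~\ref{lem:avg} applies to the consistent system $\matA\x=\y$ and bounds $\EE_{k-1}\norm{\tilde\x^{(k)}-\x^{*}}^2 \le (1-1/\kappaFS(\matA))\norm{\hat\x^{(k-1)}-\x^{*}}^2$. For the noise term, the sampling weights cancel the row norms,
\[\EE_{k-1}\left[\frac{w_{i_k}^2}{\norm{\ar{i_k}}^2}\right] = \sum_{i=1}^{m} \frac{\norm{\ar{i}}^2}{\frobnorm{\matA}^2}\cdot\frac{w_i^2}{\norm{\ar{i}}^2} = \frac{\norm{\w}^2}{\frobnorm{\matA}^2},\]
and taking total expectation yields the one-step recursion~\eqref{ineq:relaxRK}. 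Unrolling it with $c:=1-1/\kappaFS(\matA)\in(0,1)$ gives a geometric sum $\sum_{j=0}^{k-1}c^j\le 1/(1-c)=\kappaFS(\matA)$, and since $\kappaFS(\matA)\cdot\norm{\w}^2/\frobnorm{\matA}^2=\norm{\pinv{\matA}}^2\norm{\w}^2=\norm{\w}^2/\sigma^2_{\min}$ (using $\norm{\pinv{\matA}}=1/\sigma_{\min}$ and the definition of $\kappaFS$), the closed-form bound follows.

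The main subtlety is the orthogonality decomposition in the second step, and in particular verifying $\tilde\x^{(k)}-\x^{*}\perp\ar{i_k}$ \emph{without} the full column rank assumption of~\cite{Needell09}: this relies on $\y\in\colspan{\matA}$ to guarantee $\matA\x^{*}=\y$ (rather than on uniqueness of the solution), exactly as in Fact~\ref{fact:xls}. Once that orthogonality is in place, the expectation computation and the geometric-series bookkeeping are routine.
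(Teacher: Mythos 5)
Your proposal is correct and follows essentially the same route as the paper: your phantom iterate $\tilde\x^{(k)}$ is exactly the paper's auxiliary projection of $\hat\x^{(k-1)}$ onto the noiseless hyperplane $\{\x:\ip{\ar{i_k}}{\x}=y_{i_k}\}$, and the subsequent Pythagorean split, the appeal to Lemma~\ref{lem:avg} for the consistent system $\matA\x=\y$, the computation $\EE[w_{i_k}^2/\norm{\ar{i_k}}^2]=\norm{\w}^2/\frobnorm{\matA}^2$, and the geometric-series unrolling all match the paper's argument. Your explicit algebraic decomposition $\hat\x^{(k)}=\tilde\x^{(k)}+\bigl(w_{i_k}/\norm{\ar{i_k}}^2\bigr)\ar{i_k}$ and the remark that only $\y\in\colspan{\matA}$ (not full column rank) is needed are just cleaner renderings of the same steps.
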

%
%
%
\section{Randomized Extended Kaczmarz}\label{sec:result}
%
%
Given any least squares problem, Theorem~\ref{thm:RK:inconsistent} with $\w = \bc$ tells us that the randomized Kaczmarz algorithm works well for least square problems whose least squares error is very close to zero, i.e., $\norm{\w}\approx 0$. Roughly speaking, in this case the randomized Kaczmarz algorithm approaches the minimum $\ell_2$-norm least squares solution up to an additive error that depends on the distance between $\b$ and the column space of $\matA$.

In the present paper, the main observation is that it is possible to efficiently reduce the norm of the ``noisy'' part of $\b$, $\bc$ (using Algorithm~\ref{alg:randOP}) and then apply the randomized Kaczmarz algorithm on a new linear system whose right hand side vector is now arbitrarily close to the column space of $\matA$, i.e., $\matA\x\approx \br$. This idea together with the observation that the least squares solution of the latter linear system is equal (in the limit) to the least squares solution of the original system (see Fact~\ref{fact:xls}) implies a randomized algorithm for solving least squares.

Next we present the randomized extended Kaczmarz algorithm which is a specific combination of the randomized orthogonal projection algorithm together with the randomized Kaczmarz algorithm. 
%
\subsection{The algorithm}
%
\begin{algorithm}{}
	\caption{Randomized Extended Kaczmarz (REK)}\label{alg:REK}
\begin{algorithmic}[1]
\Procedure{}{$\matA$, $\b$, $\eps$}\Comment{$\matA\in\RR^{m\times n}, \b\in\RR^m$, $\eps >0$}
\State Initialize $\x^{(0)}=\zeromtx$ and $\z^{(0)} =\b$ 
\For {$k=0,1,2,\ldots $ }
	\State Pick $i_k\in[m]$ with probability $q_i:=\norm{\ar{i}}^2/\frobnorm{\matA}^2, i\in [m]$	
	\State Pick $j_k\in[n]$ with probability $p_j:=\norm{\ac{j}}^2/\frobnorm{\matA}^2,\ j\in [n]$
	\State Set $ \z^{(k+1)} = \z^{(k)} - \frac{\ip{\ac{j_k}}{\z^{(k)}}}{\norm{\ac{j_k}}^2}\ac{j_k}  $
	\State Set $ \x^{(k+1)} = \x^{(k)}  + \frac{b_{i_k} - z^{(k)}_{i_k} - \ip{\x^{(k)}}{\ar{i_k} }}{\norm{\ar{i_k}}^2} \ar{i_k}$
	\State\label{alg:stopping} Check every $8 \min (m,n)$ iterations and terminate if it holds:
	\[  \frac{\norm{\matA \x^{(k)} - (\b - \z^{(k)}) }}{\frobnorm{\matA} \norm{\x^{(k)}}}  \leq \eps \quad\text{and} \quad \frac{\norm{\matA^\top \z^{(k)}}}{\frobnorm{\matA}^2\norm{\x^{(k)}}} \leq \eps.\]
	
\EndFor
\State Output $\x^{(k)}$
\EndProcedure
\end{algorithmic}
\end{algorithm}
%
%
We describe a randomized algorithm that converges in expectation to the minimum $\ell_2$-norm solution vector $\xls$ (Algorithm~\ref{alg:REK}). The proposed algorithm consists of two components. The first component consisting of Steps $5$ and $6$ is responsible to implicitly maintain an approximation to $\br$ formed by $\b-\z^{(k)}$. The second component, consisting of Steps 4 and 7, applies the randomized Kaczmarz algorithm with input $\matA$ and the current approximation $\b -\z^{(k)}$ of $\br$, i.e., applies the randomized Kaczmarz on the system $\matA \x = \b - \z^{(k)}$. Since $\b - \z^{(k)}$ converges to $\br$, $\x^{(k)}$ will eventually converge to the minimum Euclidean norm solution of $\matA \x = \br$ which equals to $\xls=\pinv{\matA}\b$ (see Fact~\ref{fact:xls}).

The stopping criterion of Step~\ref{alg:stopping} was decided based on the following analysis. Assume that the termination criteria are met for some $k>0$. Let $\z^{(k)} = \bc + \w$ for some $\w\in\colspan{\matA}$ (which holds by the definition of $\z^{(k)}$). Then,
\begin{align*}
	\norm{\matA^\top \z^{(k)}} &= \norm{\matA^\top (\bc + \w)} = \norm{\matA^\top \w} \geq \sigma_{\min}(\matA)\norm{\z^{(k)} - \bc}.
\end{align*}
By re-arranging terms and using the second part of the termination criterion, it follows that $\norm{\z^{(k)} - \bc} \leq \eps \frac{\frobnorm{\matA}^2}{\sigma_{\min}} \norm{\x^{(k)}}$. Now,
\begin{align*}
	\norm{\matA(\x^{(k)} - \xls)} & \leq \norm{\matA\x^{(k)} - (\b - \z^{(k)}) } + \norm{ \b- \z^{(k)} - \matA\xls} \\
								  & \leq \eps \frobnorm{\matA} \norm{\x^{(k)}} + \norm{\bc - \z^{(k)}} \\
								  & \leq \eps \frobnorm{\matA} \norm{\x^{(k)}} + \eps \frac{\frobnorm{\matA}^2}{\sigma_{\min}} \norm{\x^{(k)}},
\end{align*}
where we used the triangle inequality, the first part of the termination rule together with $\br = \matA\xls$ and the above discussion.
Now, since $\x^{(k)},\xls \in\colspan{\matA^\top}$, it follows that 
\begin{equation}\label{REK:forwardErr}
			\frac{\norm{\x^{(k)} - \xls } }{\norm{\x^{(k)}}} \leq \eps \kappaF(\matA) ( 1 + \kappaF(\matA)).
\end{equation}
Equation~\eqref{REK:forwardErr} demonstrates that the forward error of REK after termination is bounded.
%
\subsection{Rate of convergence}
%
%
The following theorem bounds the expected rate of convergence of Algorithm~\ref{alg:REK}.
\begin{theorem}\label{thm:REK}
After $T>1$ iterations, in exact arithmetic, Algorithm~\ref{alg:REK} with input $\matA$ (possibly rank-deficient) and $\b$ computes a vector $\x^{(T)}$ such that
\[ \EE \norm{\x^{(T)} - \xls}^2 \leq \left(1 - \frac1{\kappaFS(\matA)}\right)^{\lfloor T/2\rfloor}\left(1 + 2\cond{\matA}\right) \norm{\xls}^2.\]
\end{theorem}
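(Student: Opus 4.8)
The plan is to analyze the two coupled sequences $\z^{(k)}$ and $\x^{(k)}$ separately and then fuse them through a single scalar recursion. First I would observe that the $\z$-component of Algorithm~\ref{alg:REK} (Steps~5--6, initialized at $\z^{(0)}=\b$) is \emph{identical} to Algorithm~\ref{alg:randOP} run on $(\matA,\b)$: each step applies the projector $\matP(j_k)$ with $j_k$ drawn from the prescribed column distribution. Hence Theorem~\ref{thm:randOP} applies verbatim and gives $\EE\norm{\z^{(k)}-\bc}^2 \leq \left(1-1/\kappaFS(\matA)\right)^k\norm{\br}^2$ for every $k$.

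For the $\x$-component, write $\e^{(k)} := \z^{(k)}-\bc$, so that the right-hand side used in Step~7 is $\b-\z^{(k)} = \br - \e^{(k)}$ (using $\b=\br+\bc$). Since $\br\in\colspan{\matA}$ we have $\matA\xls=\br$, i.e. $(\br)_i = \ip{\ar{i}}{\xls}$, so Step~7 reads as a randomized Kaczmarz step for the \emph{consistent} system $\matA\x=\br$ (whose minimum-norm solution is $\xls$) perturbed by additive noise $\e^{(k)}$. Setting $d^{(k)} := \x^{(k)}-\xls$ and expanding the squared norm, the cross terms simplify to
\[\norm{d^{(k+1)}}^2 = \norm{d^{(k)}}^2 - \frac{\ip{\ar{i_k}}{d^{(k)}}^2}{\norm{\ar{i_k}}^2} + \frac{(e^{(k)}_{i_k})^2}{\norm{\ar{i_k}}^2}.\]
Conditioning on the history and averaging over $i_k$ (which is independent of it, hence of both $d^{(k)}$ and $\e^{(k)}$), the middle term becomes $\norm{\matA d^{(k)}}^2/\frobnorm{\matA}^2$ and the last $\norm{\e^{(k)}}^2/\frobnorm{\matA}^2$. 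Since every $\x^{(k)}$ lies in $\colspan{\matA^\top}$ (it starts at $\zeromtx$ and each update adds a multiple of a row), so does $d^{(k)}$, giving $\norm{\matA d^{(k)}}^2 \geq \sigma^2_{\min}\norm{d^{(k)}}^2$. Taking total expectations and plugging in the bound on $\EE\norm{\e^{(k)}}^2$ yields, with $\rho := 1-1/\kappaFS(\matA)$,
\[\EE\norm{d^{(k+1)}}^2 \leq \rho\,\EE\norm{d^{(k)}}^2 + \frac{\rho^k\norm{\br}^2}{\frobnorm{\matA}^2}.\]

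Unrolling this from $k=0$ (where $\x^{(0)}=\zeromtx$ gives $\EE\norm{d^{(0)}}^2=\norm{\xls}^2$) produces $T$ equal contributions from the noise term, so $\EE\norm{\x^{(T)}-\xls}^2 \leq \rho^T\norm{\xls}^2 + T\rho^{T-1}\norm{\br}^2/\frobnorm{\matA}^2$. I would finish by using $\rho^T\leq\rho^{\lfloor T/2\rfloor}$ (this supplies the ``$1$'' in the prefactor) and $\norm{\br}^2=\norm{\matA\xls}^2\leq\sigma^2_{\max}\norm{\xls}^2$, and then controlling the accumulated noise via the elementary inequality $T\rho^{T-1}(1-\rho)\leq 2\rho^{\lfloor T/2\rfloor}$, noting that $1-\rho=\sigma^2_{\min}/\frobnorm{\matA}^2$ so that the noise contributes exactly $2(\sigma^2_{\max}/\sigma^2_{\min})\rho^{\lfloor T/2\rfloor}\norm{\xls}^2 = 2\cond{\matA}\rho^{\lfloor T/2\rfloor}\norm{\xls}^2$.

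The main obstacle is precisely that last inequality, which is where both the halved exponent $\lfloor T/2\rfloor$ and the factor $2$ come from: the noise naively accumulates as $T\rho^{T-1}$, which is not of the clean geometric form appearing in the statement. To establish it I would factor $\rho^{T-1}=\rho^{\lceil T/2\rceil-1}\rho^{\lfloor T/2\rfloor}$, bound $T\leq 2\lceil T/2\rceil$, and reduce to proving $m\rho^{m-1}(1-\rho)\leq 1$ for $m=\lceil T/2\rceil\geq 1$; maximizing the left-hand side over $\rho\in[0,1)$ at $\rho=(m-1)/m$ yields the value $(1-1/m)^{m-1}\leq 1$. Everything else is a routine unrolling of a linear recursion, so the quantitative heart of the argument is this single calculus estimate together with the observation that the decaying noise $\e^{(k)}$ can be handled exactly because its expected norm is itself geometrically small by Theorem~\ref{thm:randOP}.
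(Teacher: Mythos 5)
Your proposal is correct and arrives at exactly the bound in the statement, but it closes the argument by a genuinely different route than the paper. The shared skeleton is the same: the $\z$-iterates are exactly Algorithm~\ref{alg:randOP}, so $\EE\norm{\z^{(k)}-\bc}^2\leq\rho^k\norm{\br}^2$, and the $\x$-update is a randomized Kaczmarz step for the consistent system $\matA\x=\br$ corrupted by the noise $\e^{(k)}=\z^{(k)}-\bc$, yielding the one-step recursion $\EE\norm{d^{(k+1)}}^2\leq\rho\,\EE\norm{d^{(k)}}^2+\EE\norm{\e^{(k)}}^2/\frobnorm{\matA}^2$ (your direct algebraic expansion giving the exact identity $\norm{d^{(k+1)}}^2=\norm{d^{(k)}}^2-u^2/N+v^2/N$ is a small simplification over the paper, which routes this step through Theorem~\ref{thm:RK:inconsistent} and Needell's parallel-hyperplane argument). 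Where you diverge is the accumulation: the paper splits the horizon at $k^*=\lfloor T/2\rfloor$, uses the crude bound $\EE\norm{\e^{(l)}}^2\leq\norm{\br}^2$ on the first phase to show $\EE\norm{\x^{(k^*)}-\xls}^2\leq\norm{\xls}^2+\norm{\br}^2/\sigma_{\min}^2$ via a geometric sum, and then restarts with the refined bound $\EE\norm{\e^{(l+k^*)}}^2\leq\rho^{k^*}\norm{\br}^2$ on the second phase; you instead insert the tight bound $\rho^l\norm{\br}^2$ at every step, obtain the non-geometric accumulation $T\rho^{T-1}\norm{\br}^2/\frobnorm{\matA}^2$, and tame it with the calculus estimate $m\rho^{m-1}(1-\rho)\leq 1$ for $m=\lceil T/2\rceil$ (which indeed holds, e.g.\ via $m\rho^{m-1}(1-\rho)\leq\sum_{j=0}^{m-1}\rho^j(1-\rho)=1-\rho^m$), combined with $1-\rho=\sigma_{\min}^2/\frobnorm{\matA}^2$ and $\norm{\br}\leq\sigma_{\max}\norm{\xls}$. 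Both accountings produce the identical prefactor $1+2\cond{\matA}$ and exponent $\lfloor T/2\rfloor$; the paper's two-phase bookkeeping avoids any optimization over $\rho$ at the cost of a restart, while yours is a single unrolled recursion whose quantitative heart is one elementary inequality. The only nit is the word ``exactly'' in describing the noise contribution --- it is an upper bound, not an identity --- and you should state explicitly that $d^{(k)}\in\colspan{\matA^\top}$ also requires $\xls=\pinv{\matA}\b\in\colspan{\matA^\top}$, which you implicitly use; neither affects correctness.
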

%
%
\begin{proof}
%
%
For the sake of notation, set $\alpha =1- 1/\kappaFS(\matA)$ and denote by $\EE_{k}[\cdot] := \EE[\cdot \ |\ i_0,j_0, i_1,j_1,\ldots , i_k, j_k]$, i.e., the conditional expectation with respect to the first $k$ iterations of Algorithm~\ref{alg:REK}. Observe that Steps $5$ and $6$ are independent from Steps $4$ and $7$ of Algorithm~\ref{alg:REK}, so Theorem~~\ref{thm:randOP} implies that for every $l\geq 0$
\begin{equation}\label{eq:improve}
\EE \norm{\z^{(l)} - \bc }^2 \leq \alpha^l \norm{\br}^2 \leq \norm{\br}^2.
\end{equation}
Fix a parameter $k^* := \lfloor T/2 \rfloor$. After the $k^*$-th iteration of Algorithm~\ref{alg:REK}, it follows from Theorem~\ref{thm:RK:inconsistent} (Inequality~\eqref{ineq:relaxRK}) that
\begin{align*}
	\EE_{(k^*-1)} \norm{\x^{(k^*)} - \xls}^2 & \leq  \alpha  \norm{\x^{(k^* - 1)} - \xls}^2 + \frac{ \norm{\bc - \z^{(k^* - 1)}}^2}{\frobnorm{\matA}^2}.
\end{align*}
Indeed, the randomized Kaczmarz algorithm is executed with input $(\matA, \b - \z^{(k^*-1)})$ and current estimate vector $\x^{(k^* -1)}$. Set $\y=\br$ and $\w=\bc - \z^{(k^*-1)}$ in Theorem~\ref{thm:RK:inconsistent} and recall that $\xls=\pinv{\matA} \b = \pinv{\matA}\br = \pinv{\matA}\y$.
Now, averaging the above inequality over the random variables $i_1,j_1, i_2, j_2, \ldots, i_{k^* - 1}, j_{k^* - 1}$ and using linearity of expectation, it holds that
\begin{align}
	\EE \norm{\x^{(k^*)} - \xls}^2 & \leq  \alpha  \EE\norm{\x^{(k^* - 1)} - \xls}^2 + \frac{ \EE\norm{\bc - \z^{(k^* - 1)}}^2}{\frobnorm{\matA}^2}\label{eq:star}\\
	& \leq  \alpha \EE \norm{\x^{(k^* - 1)} - \xls}^2 + \frac{\norm{\br}^2}{\frobnorm{\matA}^2} \quad  \text{by Ineq.}~\eqref{eq:improve}\nonumber\\
	& \leq  \ldots \leq \alpha^{k^*}\norm{\x^{(0)} - \xls}^2 + \sum_{l=0}^{k^* - 2} \alpha^l \frac{\norm{\br}^2}{\frobnorm{\matA}^2},\quad (\text{repeat the above } k^* - 1 \text{ times}) \nonumber \\
	& \leq  \norm{\xls}^2 + \sum_{l=0}^{\infty} \alpha^l \frac{\norm{\br}^2}{\frobnorm{\matA}^2}, \quad \text{since } \alpha <1\text{ and }\x^{(0)} = \zero . \nonumber
\end{align}
Simplifying the right hand side using the fact that $\sum_{l=0}^{\infty} \alpha^l = \frac1{1-\alpha} = \kappaFS(\matA)$, it follows
\begin{equation}\label{eq:stopTime}
		\EE \norm{\x^{(k^*)} - \xls}^2 \leq  \norm{\xls}^2 + \norm{\br}^2/\sigma_{\min}^2.
\end{equation}
Moreover, observe that for every $l\geq 0$
\begin{equation}\label{eq:better}
\EE \norm{\bc - \z^{(l+k^*)}}^2 \leq \alpha^{l+k^*} \norm{\br}^2 \leq \alpha^{k^*}\norm{\br}^2.
\end{equation}
Now for any $k>0$, similar considerations as Ineq.~\eqref{eq:star} implies that
\begin{align*}
\EE \norm{\x^{(k + k^*)} - \xls}^2  & \leq   \alpha \EE \norm{\x^{(k + k^* - 1)} - \xls}^2 + \frac{\EE \norm{\bc - \z^{(k - 1 + k^*)}}^2}{\frobnorm{\matA}^2} \\
									& \leq   \ldots \leq \alpha^k \EE \norm{\x^{(k^*)} - \xls}^2 + \sum_{l=0}^{k - 1}\alpha^{(k - 1) - l} \frac{\EE \norm{\bc - \z^{(l + k^*)}}^2}{\frobnorm{\matA}^2} \quad \text{(by induction)}\\
									& \leq   \alpha^k \EE \norm{\x^{(k^*)} - \xls}^2 + \frac{\alpha^{k^*}\norm{\br}^2}{\frobnorm{\matA}^2} \sum_{l=0}^{k - 1}\alpha^{l} \quad  \text{(by Ineq.~\eqref{eq:better})} \\
									& \leq   \alpha^k \left(\norm{\xls}^2 + \norm{\br}^2/\sigma_{\min}^2\right)  + \alpha^{k^*}\norm{\br}^2 / \sigma_{\min}^2 \quad  \left(\text{by Ineq.~\eqref{eq:stopTime}}\right) \\
									&   =   \alpha^k \norm{\xls}^2 + (\alpha^{k}+\alpha^{k^*}) \norm{\br}^2/\sigma_{\min}^2\\
									&   \leq   \alpha^k \norm{\xls}^2 + (\alpha^{k}+\alpha^{k^*}) \cond{\matA} \norm{\xls}^2 \quad \text{since }\norm{\br} \leq \sigma_{\max} \norm{\xls} \\
									&   \leq \alpha^{k^*} (1 + 2\cond{\matA})\norm{\xls}^2.
\end{align*}
To derive the last inequality, consider two cases. If $T$ is even, set $k=k^*$, otherwise set $k=k^*+1$. In both cases, $(\alpha^{k}+\alpha^{k^*}) \leq 2\alpha^{k^*}$.
%
\end{proof}

%
\subsection{Theoretical bounds on time complexity}
%
In this section, we discuss the running time complexity of the randomized extended Kaczmarz (Algorithm~\ref{alg:REK}). Recall that REK is a Las-Vegas randomized algorithm, i.e., the algorithm always outputs an ``approximately correct'' least squares estimate (satisfying~\eqref{REK:forwardErr}) but its runnning time is a random variable. Given any fixed accuracy parameter $\eps>0$ and any fixed failure probability $0<\delta<1$ we bound the number of iterations required by the algorithm to terminate with probability at least $1-\delta$.
\begin{lemma}\label{lem:runtime}
Fix an accuracy parameter $0<\eps<2$ and failure probability $0<\delta<1$. In exact arithmetic, Algorithm~\ref{alg:REK} terminates after at most
	\[T^*:=2\kappaFS(\matA) \ln \left(\frac{32(1+2\cond{\matA})}{\delta\eps^2}\right)\]
iterations with probability at least $1-\delta$.
\end{lemma}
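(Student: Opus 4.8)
The plan is to show that once the expected errors $\EE\norm{\x^{(k)}-\xls}^2$ and $\EE\norm{\bc-\z^{(k)}}^2$ have decayed enough, both tests of Step~\ref{alg:stopping} pass with probability at least $1-\delta$, and then to solve for how many iterations this decay requires. The first step is to reduce the two tests to bounds on $E_x:=\norm{\x^{(k)}-\xls}$ and $E_z:=\norm{\z^{(k)}-\bc}$. Writing $\b=\br+\bc=\matA\xls+\bc$ gives $\matA\x^{(k)}-(\b-\z^{(k)})=\matA(\x^{(k)}-\xls)+(\z^{(k)}-\bc)$, so the first numerator is at most $\sigma_{\max}E_x+E_z$; and since $\matA^\top\bc=\zero$, the second numerator equals $\norm{\matA^\top(\z^{(k)}-\bc)}\le\sigma_{\max}E_z$.

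Next I would introduce the good event
\[G:=\left\{E_x\le \tfrac{\eps\norm{\xls}}{4}\right\}\cap\left\{E_z\le \tfrac{\eps\frobnorm{\matA}\norm{\xls}}{4}\right\}\]
and check that on $G$ both tests pass. The key point is that $E_x\le \eps\norm{\xls}/4\le\norm{\xls}/2$ (using $\eps<2$) forces $\norm{\x^{(k)}}\ge\norm{\xls}/2$, so the denominators $\frobnorm{\matA}\norm{\x^{(k)}}$ and $\frobnorm{\matA}^2\norm{\x^{(k)}}$ are bounded below; combining this with $\sigma_{\max}\le\frobnorm{\matA}$ and the numerator bounds above makes each ratio at most $\eps$. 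This is the same column-space reasoning ($\x^{(k)},\xls\in\colspan{\matA^\top}$) already used to derive~\eqref{REK:forwardErr}.

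Then I would bound $\Prob{G^c}$ by Markov's inequality and the union bound. Theorem~\ref{thm:REK} gives $\EE E_x^2\le\alpha^{\lfloor T/2\rfloor}(1+2\cond{\matA})\norm{\xls}^2$ with $\alpha=1-1/\kappaFS(\matA)$, while Ineq.~\eqref{eq:improve} together with $\norm{\br}\le\sigma_{\max}\norm{\xls}$ and $\alpha^T\le\alpha^{\lfloor T/2\rfloor}$ gives $\EE E_z^2\le\alpha^{\lfloor T/2\rfloor}\sigma_{\max}^2\norm{\xls}^2$. Markov then yields
\[\Prob{E_x>\tfrac{\eps\norm{\xls}}{4}}\le\frac{16\,\alpha^{\lfloor T/2\rfloor}(1+2\cond{\matA})}{\eps^2},\qquad \Prob{E_z>\tfrac{\eps\frobnorm{\matA}\norm{\xls}}{4}}\le\frac{16\,\alpha^{\lfloor T/2\rfloor}}{\eps^2},\]
the second bound also using $\sigma_{\max}^2\le\frobnorm{\matA}^2$. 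Requiring each to be at most $\delta/2$ reduces to the single condition $\alpha^{\lfloor T/2\rfloor}\le \delta\eps^2/\big(32(1+2\cond{\matA})\big)$, whence $\Prob{G}\ge 1-\delta$ by the union bound.

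Finally I would solve this inequality for $T$. Using $\alpha=1-1/\kappaFS(\matA)\le\exp(-1/\kappaFS(\matA))$ gives $\alpha^{\lfloor T/2\rfloor}\le\exp(-\lfloor T/2\rfloor/\kappaFS(\matA))$, so the condition holds once $\lfloor T/2\rfloor\ge \kappaFS(\matA)\ln\!\big(32(1+2\cond{\matA})/(\delta\eps^2)\big)$, i.e. $T\ge T^*$. Since failing to terminate by iteration $T^*$ forces both tests to have failed at the relevant checkpoint, an event contained in $G^c$, the algorithm stops within $T^*$ iterations with probability at least $1-\delta$. The one place I expect friction is the bookkeeping between the floor $\lfloor T/2\rfloor$ and the fact that Step~\ref{alg:stopping} is only evaluated every $8\min(m,n)$ iterations, so that ``the checkpoint at or before $T^*$'' must itself be past the threshold; this rounding is harmless and is absorbed into the generous constant $32$.
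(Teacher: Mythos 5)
Your proposal is correct and follows essentially the same route as the paper: define two high-probability events (one for $\x^{(k)}$ near $\xls$, one for $\z^{(k)}$ near $\bc$) via Markov's inequality applied to Theorem~\ref{thm:REK} and Ineq.~\eqref{eq:improve}, take a union bound, verify deterministically that on the good event both stopping tests pass (using $\norm{\x^{(k)}}\geq(1-\eps/4)\norm{\xls}$ and $\sigma_{\max}\leq\frobnorm{\matA}$), and solve for $T$ with $1-t\leq e^{-t}$. The only cosmetic differences are that you threshold $E_z$ at $\eps\frobnorm{\matA}\norm{\xls}/4$ rather than the paper's $\eps\norm{\br}/4$ (equivalent up to $\norm{\br}\leq\sigma_{\max}\norm{\xls}\leq\frobnorm{\matA}\norm{\xls}$) and use the slightly weaker exponent $\lfloor T/2\rfloor$ for the $\z$-event; the floor/checkpoint bookkeeping you flag is glossed over in the paper's proof as well.
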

\begin{proof}
	Denote $\alpha := 1 - 1/ \kappaFS(\matA)$ for notational convenience. It suffices to prove that with probability at least $1-\delta$ the conditions of Step~\ref{alg:stopping} of Algorithm~\ref{alg:REK} are met. Instead of proving this, we will show that:
\begin{enumerate}
	\item With probability at least $1-\delta/2$: $\norm{(\b - \z^{(T^*)}) - \br} \leq \eps\norm{\br} / 4$.
	\item With probability at least $1-\delta/2$: $\norm{\x^{(T^*)}-\xls} \leq \eps \norm{\xls}/4$.
\end{enumerate}
Later we prove that Items (1) and (2) imply the Lemma. First we prove Item (1). By the definition of the algorithm, 
\begin{align*}
	\Prob{ \norm{ (\b - \z^{(T^*)}) - \br } \geq \eps\norm{\br} / 4} & =  \Prob{ \norm{\bc - \z^{(T^*)}  }^2 \geq \eps^2\norm{\br}^2/16} \\
																	& \leq  \frac{16 \EE \norm{  \z^{(T^*)} - \bc }^2}{\eps^2\norm{\br}^2} \\
																	& \leq  16\alpha^{T^*}/\eps^2  \leq \delta/2
\end{align*}
the first equality follows since $\b -\br = \bc$, the second inequality is Markov's inequality, the third inequality follows by Theorem~\ref{thm:randOP}, and the last inequality since $T^*\geq \kappaFS(\matA) \ln(\frac{32}{\delta \eps^2})$.

Now, we prove Item (2):
\begin{align*}
		\Prob{ \norm{\x^{(T^*)}-\xls} \leq \eps\norm{\xls}/4} & \leq  \frac{ 16\EE \norm{\x^{(T^*)}-\xls}^2 }{\eps^2\norm{\xls}^2} \\
																					& \leq  16\alpha^{\lfloor T^*/2\rfloor} (1+2\cond{\matA})/ \eps^2  \leq \delta/2.
\end{align*}
the first inequality is Markov's inequality, the second inequality follows by Theorem~\ref{thm:REK}, and the last inequality follows provided that $T^* \geq 2\kappaFS(\matA) \ln \left(\frac{32(1+2\cond{\matA})}{\delta\eps^2}\right)$
A union bound on the complement of the above two events (Item (1) and (2)) implies that both events happen with probability at least $1-\delta$. Now we show that conditioning on Items (1) and (2), it follows that REK terminates after $T^*$ iterations, i.e.,
\[ \norm{\matA \x^{(T^*)} - (\b - \z^{(T^*)}) } \leq \eps \frobnorm{\matA}\norm{\x^{(T^*)}}\quad \text{and}\quad \frac{\norm{\matA^\top \z^{(k)}}}{\frobnorm{\matA}^2\norm{\x^{(k)}}} \leq \eps.\]
We start with the first condition. First, using triangle inequality and Item 2, it follows that
\begin{equation}\label{ineq:xlsxk}
\norm{\x^{(T^*)}} \geq  \norm{\xls} - \norm{\xls-\x^{(T^*)}}  \geq (1 - \eps/4 ) \norm{\xls}.
\end{equation}
Now,
\begin{align*}
	\norm{\matA \x^{(T^*)} - (\b - \z^{(T^*)}) } & \leq \norm{\matA \x^{(T^*)} - \br} + \norm{ (\b - \z^{(T^*)}) - \br } \\
	 											 & \leq \norm{\matA (\x^{(T^*)} - \xls)} + \eps \norm{\br} / 4 \\
		 										 & \leq \sigma_{\max} \norm{\x^{(T^*)} - \xls} + \eps \norm{\matA\xls} / 4 \\
												 & \leq \eps \sigma_{\max} \norm{\xls} /2 \\
												 & \leq \frac{\eps/2}{1- \eps/4} \norm{\x^{(T^*)}} \leq \eps \norm{\x^{(T^*)}}
\end{align*}
where the first inequality is triangle inequality, the second inequality follows by Item $1$ and $\br =\matA\xls$, the third and forth inequality follows by Item $2$ and the fifth inequality holds by Inequality~\eqref{ineq:xlsxk} and the last inequality follows since $\eps < 2$.
The second condition follows since
\begin{align*}
	\norm{\matA^\top \z^{(T^*)}} & = \norm{\matA^\top (\bc - \z^{(T^*)})}  \leq \sigma_{\max} \norm{\bc - \z^{(T^*)}} \\
								 & \leq \eps \sigma_{\max} \norm{\br}/4  \leq \eps \sigma^2_{\max} \norm{\xls}/4 \\
								 & \leq \frac{\eps/4}{1-\eps /4} \sigma^2_{\max} \norm{\x^{(T^*)} } \leq \eps \frobnorm{\matA}^2 \norm{\x^{(T^*)} }.
\end{align*}
the first equation follows by orthogonality, the second inequality assuming Item (2), the third inequality follows since $\br=\matA\xls$, the forth inequality follows by~\eqref{ineq:xlsxk} and the final inequality since $\eps <2$.
\end{proof}

Lemma~\ref{lem:runtime} bounds the number of iterations with probability at least $1-\delta$, next we bound the total number of arithmetic operations in worst case~(Eqn.~\eqref{eq:runtime:det}) and in expectation~(Eqn.~\eqref{eq:runtime:rand}). Let's calculate the computational cost of REK in terms of floating-point operations (flops) per iteration. For the sake of simplicity, we ignore the additional (negligible) computational overhead required to perform the sampling operations (see Section~\ref{sec:impl} for more details) and checking for convergence. 

Each iteration of Algorithm~\ref{alg:REK} requires four level-1 BLAS operations (two \emph{DDOT} operations of size $m$ and $n$, respectively, and two \emph{DAXPY} operations of size $n$ and $m$, respectively) and additional four flops. In total, $4(m+n)+2$ flops per iteration. 
Therefore by Lemma~\ref{lem:runtime}, with probability at least $1-\delta$, REK requires at most 
\begin{equation}\label{eq:runtime:det}
5 (m +n ) \cdot T^* \leq  10 (m+n) \rank{\matA}  \cond{\matA} \ln \left(\frac{32(1+2\cond{\matA})}{\delta\eps^2}\right)
\end{equation}
arithmetic operations (using that $\kappaFS(\matA) \leq \rank{\matA} \cond{\matA}$).
Next, we bound the \emph{expected running time} of REK for achieving the above guarantees for any fixed $\eps$ and $\delta$. Obviously, the expected running time is at most the quantity in~\eqref{eq:runtime:det}. However, as we will see shortly the expected running time is proportional to $\nnz{\matA}$ instead of $(m+n)\rank{\matA}$.

Exploiting the (possible) sparsity of $\matA$, we first show that each iteration of Algorithm~\ref{alg:REK} requires at most $5(\cavg + \ravg)$ operations in expectation. For simplicity of presentation, we assume that we have stored $\matA$ in compressed column sparse format \emph{and} compressed row sparse format~\cite{book:templates}.

Indeed, fix any $i_k\in{[m]}$ and $j_k\in{[n]}$ at some iteration $k$ of Algorithm~\ref{alg:REK}. Since $\matA$ is both stored in compressed column and compressed sparse format, Steps $7$ and Step $8$ can be implemented in $5 \nnz{\ac{j_k}}$ and 5$\nnz{\ar{i_k}}$, respectively.

By the linearity of expectation and the definitions of $\cavg$ and $\ravg$, the expected running time after $T^*$ iterations is at most $5T^* (\cavg + \ravg)$. It holds that (recall that $p_j = \norm{\ac{j}}^2/\frobnorm{\matA}^2$)
\begin{align*}
	\cavg T^* & =  \frac{2}{\frobnorm{\matA}^2}\left(\sum_{j=1}^{n} \norm{\ac{j}}^2 \nnz{\ac{j}}\right)\frac{\frobnorm{\matA}^2}{\sigma_{\min}^2}\ln \left(\frac{32(1+2\cond{\matA})}{\delta\eps^2}\right)\\
			  & =  2\frac{\sum_{j=1}^{n} \norm{\ac{j}}^2 \nnz{\ac{j}}}{\sigma_{\min}^2}\ln \left(\frac{32(1+2\cond{\matA})}{\delta\eps^2}\right)\\
			  & \leq 2\sum_{j=1}^{n}\nnz{\ac{j}}\frac{\max_{j\in{[n]}} \norm{\ac{j}}^2}{\sigma_{\min}^2} \ln \left(\frac{32(1+2\cond{\matA})}{\delta\eps^2}\right)\\
			& \leq 2\nnz{\matA}\cond{\matA} \ln \left(\frac{32(1+2\cond{\matA})}{\delta\eps^2}\right)
\end{align*}
using the definition of $\cavg$ and $T^*$ in the first equality and the fact that $\max_{j\in{[n]}} \norm{\ac{j}}^2 \leq \sigma_{\max}^2$ and $\sum_{j=1}^{n}\nnz{\ac{j}} = \nnz{\matA}$ in the first and second inequality. A similar argument shows that $\ravg T^* \leq 2\nnz{\matA} \cond{\matA} \ln \left(\frac{32(1+2\cond{\matA})}{\delta\eps^2}\right)$ using the inequality $\max_{i\in{[m]}} \norm{\ar{i}}^2 \leq \sigma_{\max}^2$.
Hence by Lemma~\ref{lem:runtime}, with probability at least $1-\delta$, the expected number of arithmetic operations of REK is at most 
\begin{equation}\label{eq:runtime:rand}
 20 \nnz{\matA} \cond{\matA}\ln \left(\frac{32(1+2\cond{\matA})}{\delta\eps^2}\right).
\end{equation}
In other words, the expected running time analysis is much tighter than the worst case displayed in Equation~\eqref{eq:runtime:det} and is proportional to $\nnz{\matA}$ times the square condition number of $\matA$ as advertised in the abstract.
%
\section{Implementation and Experimental Results}\label{sec:impl}
%

%
\subsection{Implementation}
%
The proposed algorithm has been entirely implemented in C. We provide three implementation of Algorithm~\ref{alg:REK}: REK-C, REK-BLAS and REK-BLAS-PRECOND. REK-C corresponds to a direct translation of Algorithm~\ref{alg:REK} to C code. REK-BLAS is an implementation of REK with two additional technical features. First, REK-BLAS uses level-1 BLAS routines for all operations of Algorithm~\ref{alg:REK} and secondly REK-BLAS additionally stores explicitly the transpose of $\matA$ for more efficiently memory access of both the rows and columns of $\matA$ using BLAS. REK-BLAS-PRECOND is an implementation of REK-BLAS that additionally supports upper triangular preconditioning; we used Blendenpik's preconditioning code to ensure a fair comparison with Blendenpik (see Section~\ref{sec:exp}). In the implementations of REK-C and REK-BLAS we check for convergence every $8\min (m,n)$ iterations. 

Moreover, all implementations include efficient code that handles sparse input matrices using the compressed column (and row) sparse matrix format~\cite{book:templates}.

\paragraph{Sampling from non-uniform distributions}
The sampling operations of Algorithm~\ref{alg:REK} (Steps 4 and 5) are implemented using the so-called ``alias method'' for generating samples from any given discrete distribution~\cite{sampling:Walker,random:Alias}. The alias method, assuming access to a uniform random variable on $[0,1]$ in constant time and linear time preprocessing, generates one sample of the given distribution in constant time~\cite{random:Alias}. We use an implementation of W. D. Smith that is described in~\cite{aliasMethod:C} and C's \emph{drand48}() to get uniform samples from $[0,1]$.
\begin{figure}[h]
     \subfigure[Random sparse matrices having 800 columns and density 0.25.]{\scalebox{1.0}{\label{fig:sparseOver}\includegraphics[width=0.5\textwidth]{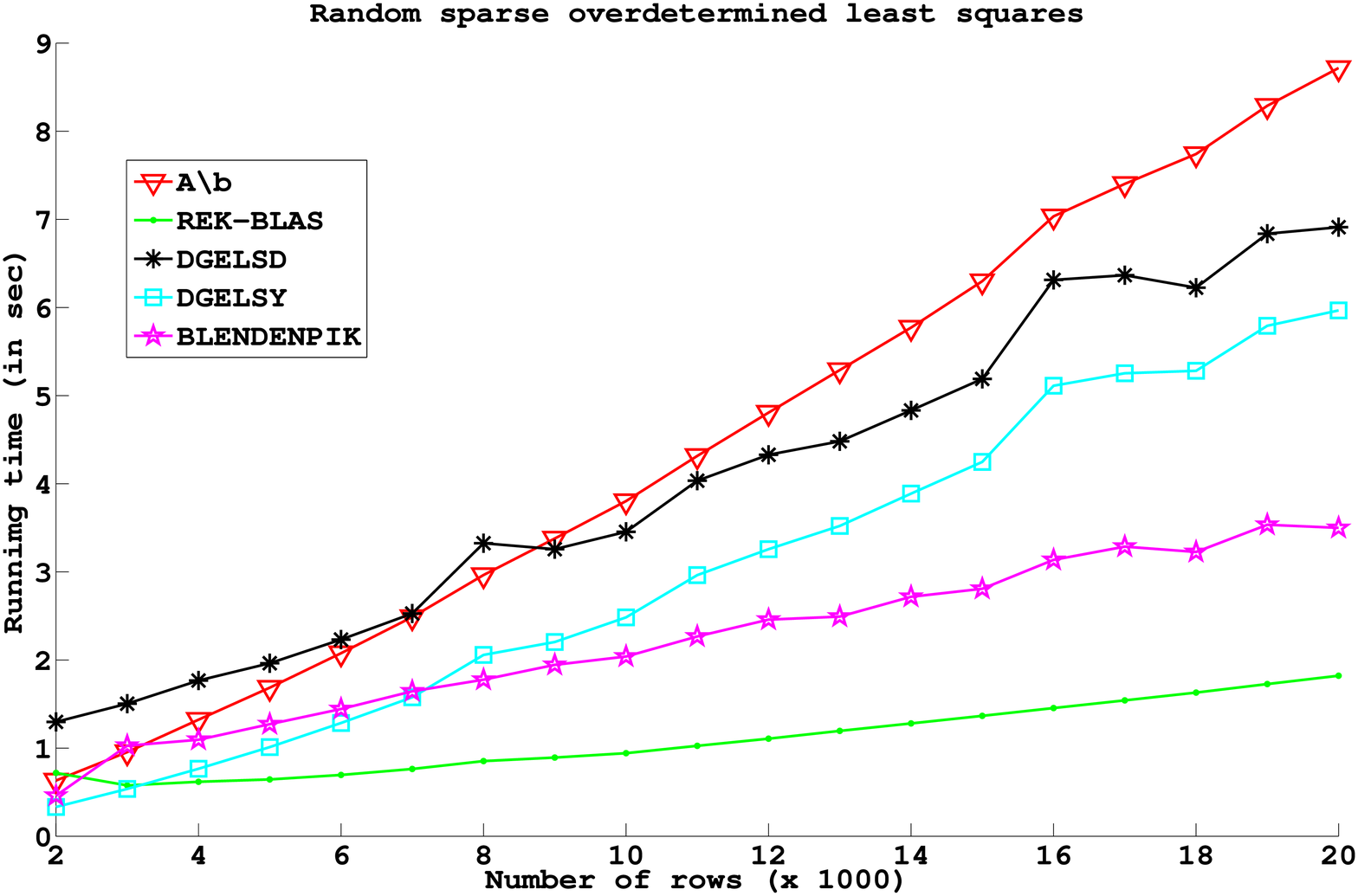}}}
     \subfigure[Random sparse matrices having 800 rows and density 0.25.]
{\scalebox{1.0}{\label{fig:sparseUnder}\includegraphics[width=0.5\textwidth]{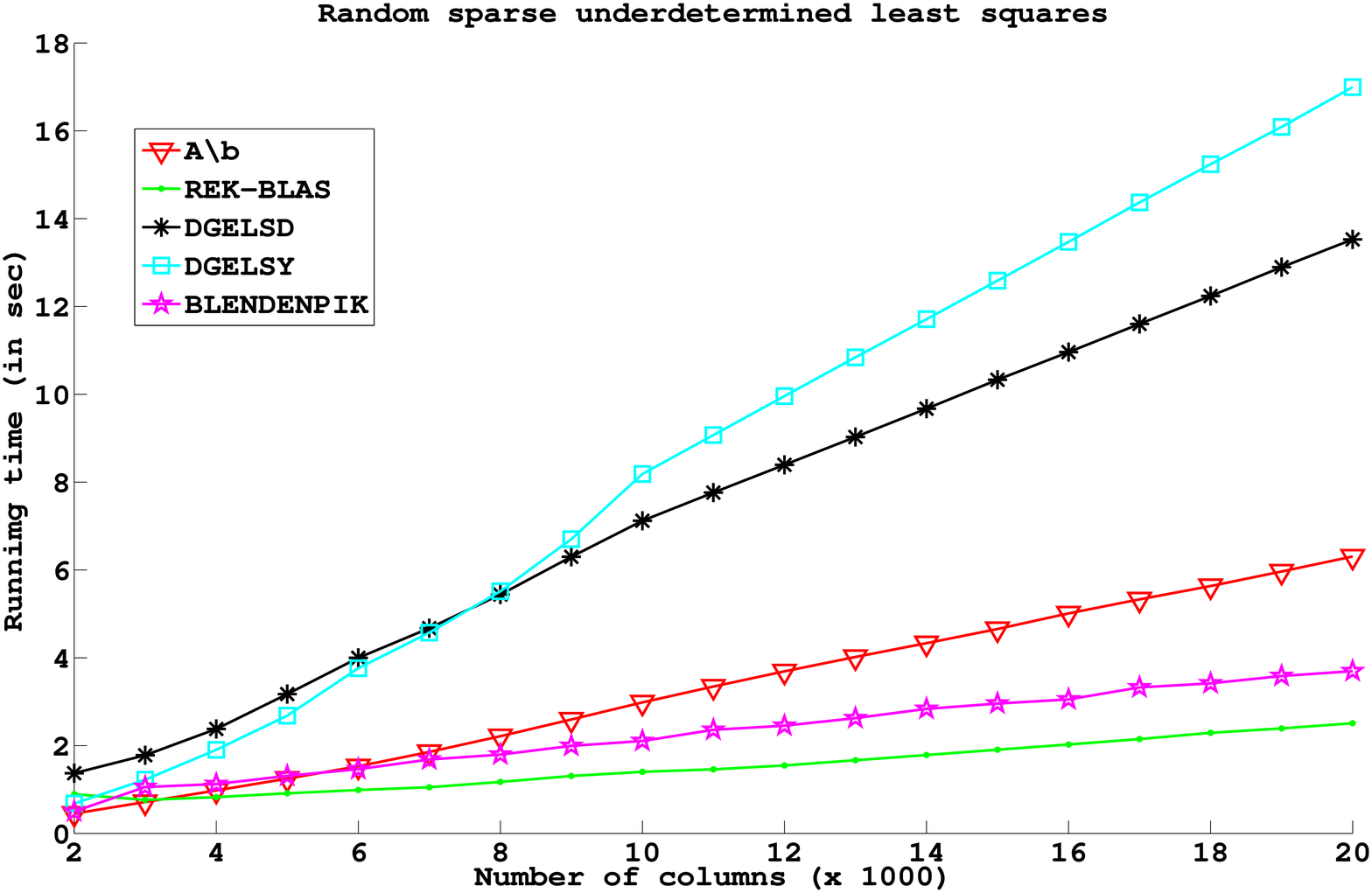}}} 
\caption{Figures depict the running time (in seconds) vs increasing number of rows/columns (scaled by 1000) for the case of random sparse overdetermined (Figure~\ref{fig:sparseOver}) and underdetermined (Figure~\ref{fig:sparseUnder}) least squares problems.}\label{fig:sparse}
\end{figure}

\begin{figure}[h]
     \subfigure[Random dense matrices having 500 columns.]{\scalebox{1.0}{\label{fig:denseOver}\includegraphics[width=0.5\textwidth]{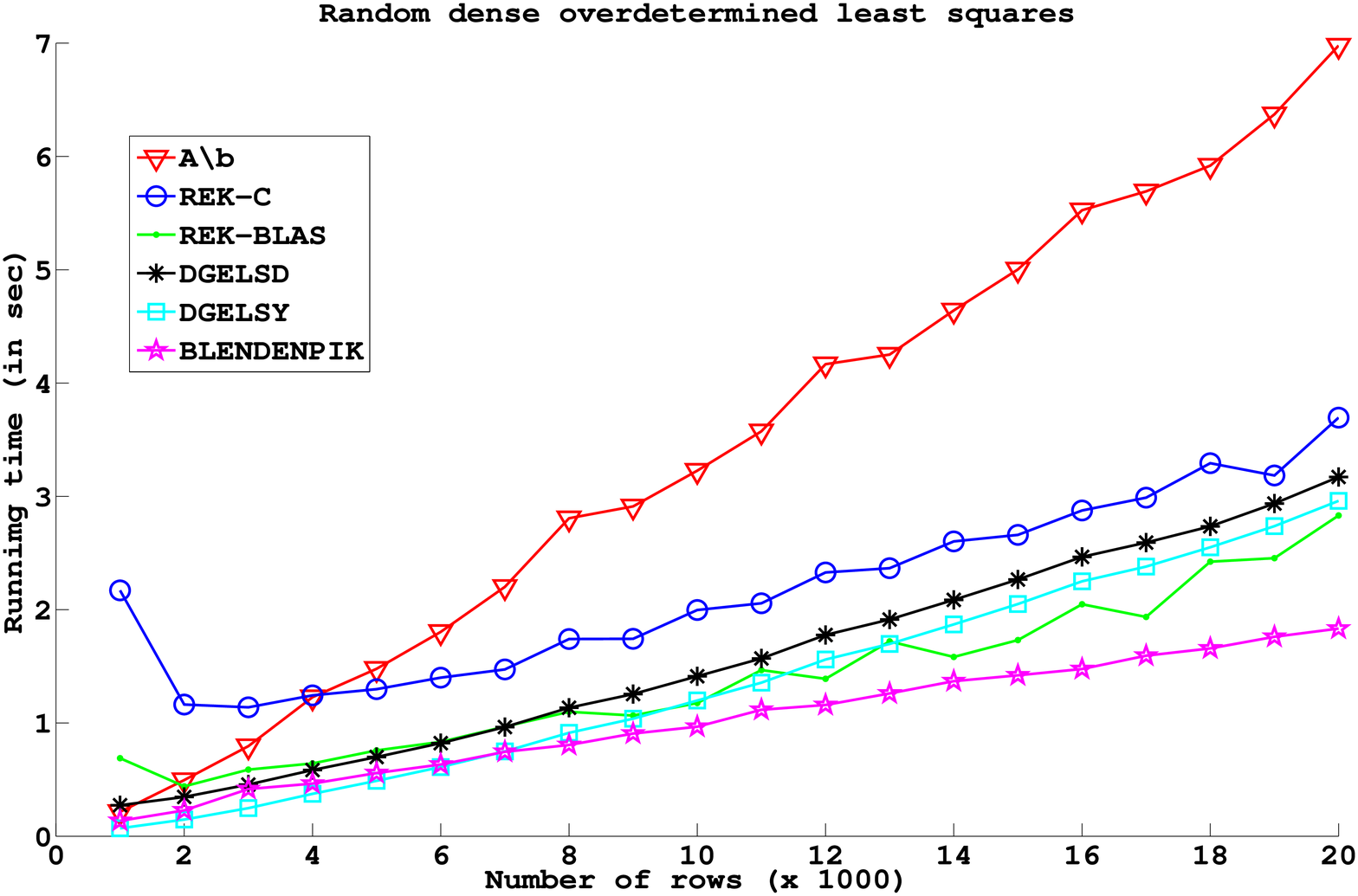}}}
     \subfigure[Random dense matrices having 500 rows.]{\scalebox{1.0}{\label{fig:denseUnder}\includegraphics[width=0.5\textwidth]{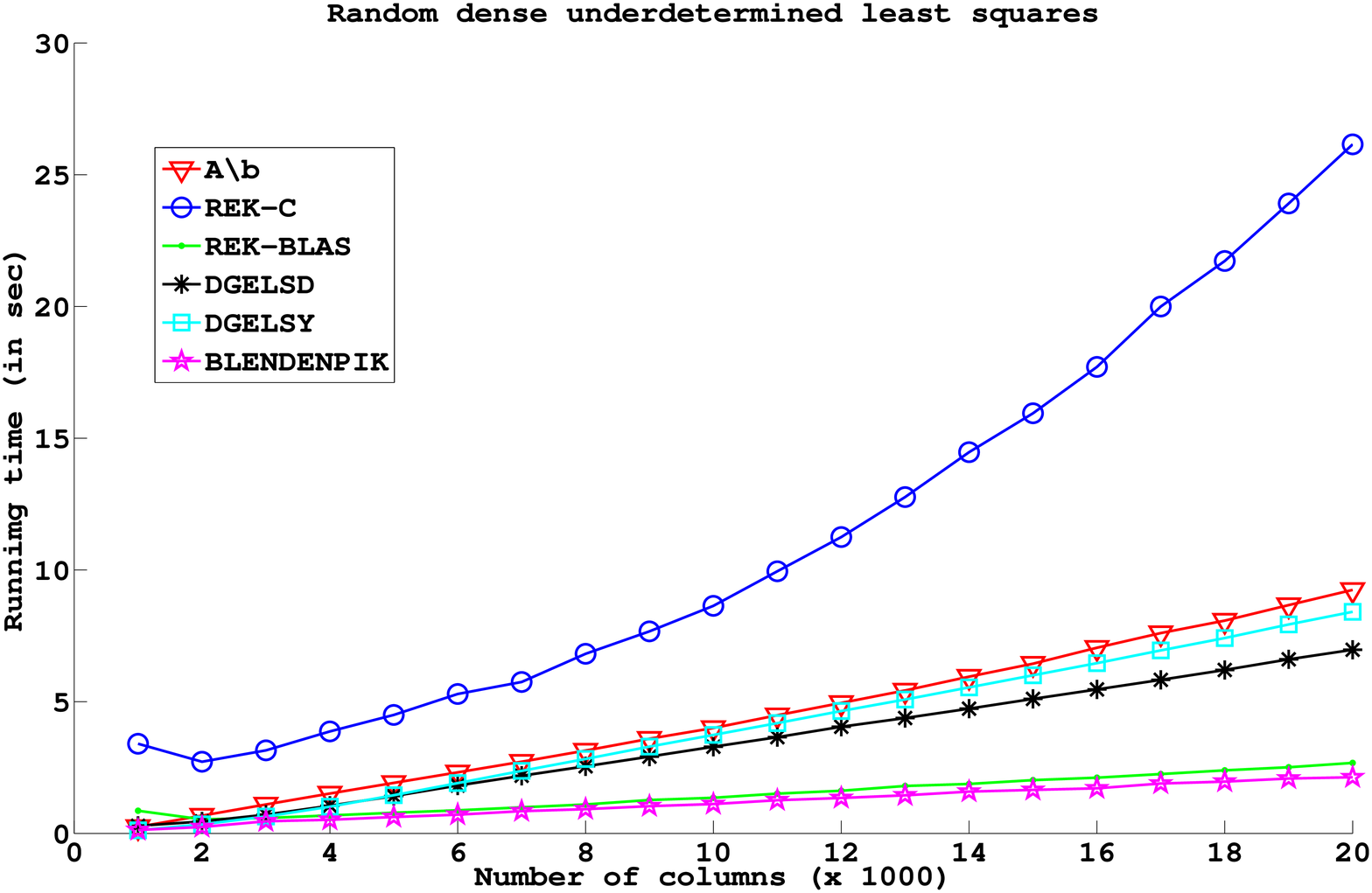}}} 
\caption{Figures depict the running time (in seconds) vs increasing number of rows/columns (scaled by 1000) for the case of random dense overdetermined (Figure~\ref{fig:denseOver}) and underdetermined (Figure~\ref{fig:denseUnder}) least squares problems.}\label{fig:dense}
\end{figure}

\subsection{Experimental Results}\label{sec:exp}
We report our experimental results in this section. We compared the randomized extended Kaczmarz (REK-C, REK-BLAS,REK-BLAS-PRECOND) algorithm to LAPACK's DGELSY and DGELSD least squares solvers, Blendenpick\footnote{Available at http://www.mathworks.com/matlabcentral/fileexchange/25241-blendenpik. Blendenpik's default settings were used.} (version 1.3,~\cite{AMT10}) and MATLAB's backslash operator. LSRN~\cite{lsrn} did not perform well under a setup in which no parallelization is allowed as the one used here, so we do not include LSRN's performance. DGELSY uses QR factorization with pivoting and DGELSD uses the singular value decomposition. We use MATLAB with version 7.9.0.529 (R2009b). In addition, we use MATLAB's included BLAS and LAPACK packages and we call LAPACK's functions from MATLAB using MATLAB's CMEX technology which allows us to measure only LAPACK's elapsed time. We should highlight that MATLAB is used as a scripting language and no MATLAB-related overheads have been taken under consideration. Blendenpik requires the FFTW library\footnote{http://www.fftw.org/}; we used FFTW-3.3.3. To match the accuracy of LAPACK's direct solvers, we fixed $\eps$ in Algorithm~\ref{alg:REK} to be 10e-14. Moreover, during our experiments we ensured that the residual error of all the competing algorithms were about of the same order of magnitude.

We used a Pentium(R) Dual-Core E5300 (2.60GHz) equipped with 5GB of RAM and compiled our source code using GCC-4.7.2 under Linux operating system. All running times displayed below are measured using the \emph{ftime} Linux system call by taking the average of the running time of 10 independent executions.

We experimented our algorithm under three different distributions of random input matrices (sparse, dense and ill-conditioned) under the setting of strongly rectangular settings of least squares instances. In all cases we normalized the column norms of the input matrices to unity and generate the right hand side vector $\b$ having Gaussian entries of variance one.

\paragraph{Sparse least squares}
We tested our algorithm in the overdetermined setting of random sparse $m\times n$ matrices with $n=800$ and $m=2000,3000, \ldots , 20000$ and density $0.25$. We also tested REK-BLAS on the underdetermined case where $m=800$ and $n=2000,3000, \ldots , 20000$. In both cases, the density of the sparse matrices was set to $0.25$ (for even sparser matrices REK-BLAS performed even better compared to all other mentioned methods). To generate these sparse matrix ensembles, we used MATLAB's \emph{sprandn} function with variance one. The results are depicted in Figure~\ref{fig:sparse}. Both plots demonstrate that REK-BLAS is superior on both the underdetermined (Figure~\ref{fig:sparseUnder}) and overdetermined case (Figure~\ref{fig:sparseOver}). It is interesting that REK-BLAS performs well in the underdetermined case.
\paragraph{Dense and well-conditioned least squares}
In this scenario, we used random overdetermined dense $m\times n$ matrices with much more rows than columns, i.e., we set $n=500$ and $m=1000,2000,\ldots ,20000$. We also tested REK-BLAS on the underdetermined case where $m=500$ and $n=1000,2000, \ldots, 20000$. We generated this set of matrices using MATLAB's \emph{randn} function with variance ten. We depicted the results in Figure~\ref{fig:denseOver}. In the overdetermined case (Figure~\ref{fig:denseOver}), REK-BLAS is marginally superior compared to LAPACK's routines whereas REK-C (as a naive implementation of Algorithm~\ref{alg:REK}) is inferior. Blendepik is the winner in this case. Interestingly, REK-BLAS almost matches the performance of Blendenpik in the underdetermined case, see Figure~\ref{fig:denseUnder}.
\begin{figure}[ht]
       \centering
                \includegraphics[width=0.85\textwidth]{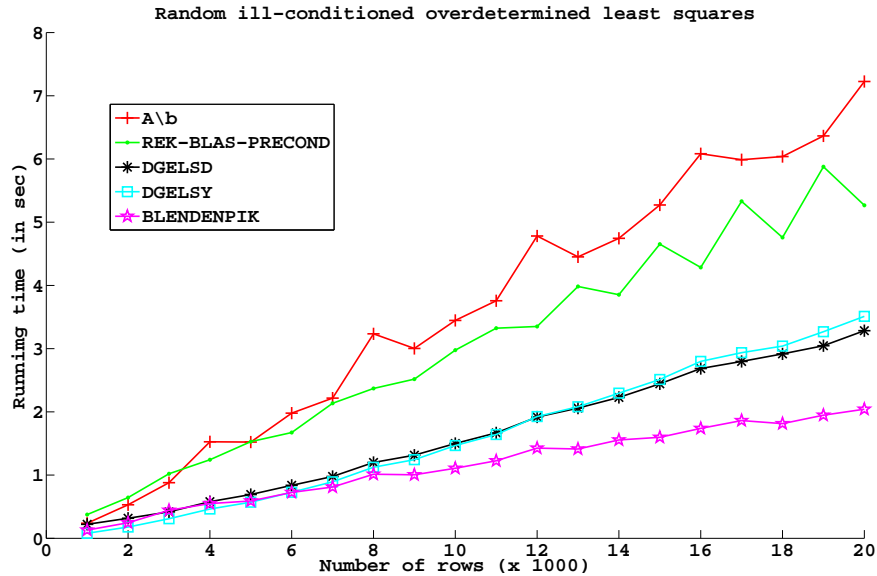}
                 \caption{Running time (in seconds) vs number of rows (scaled by 1000) for the case of random dense and ill-conditioned input matrices having 500 columns and condition number 10e6.}
                \label{fig:cond}
\end{figure}
\paragraph{Dense and ill-conditioned least squares}
Finally, we tested all algorithms under a particular case of random ill-conditioned dense matrices with $n=500$ and $m=1000,2000, \ldots , 20000$. Namely, we used Higham's \emph{randSVD} function for generating these matrices~\cite{testMat:Higham,book:higham}. More precisely, we set the condition number of these matrices to be $10e6$; set the top singular value to one and the rest to 10e-6. The results are displayed in Figure~\ref{fig:cond}. Unfortunately, in the ill-conditioned setting REK-BLAS-PRECOND is inferior compared to LAPACK's routines and Blendepik. We also verified the results of~\cite{AMT10} that Blendepik is superior compared to LAPACK's least squares solvers in this setting.
%
\section{Acknowledgements}
%
We would like to thank the anonymous reviewers for their invaluable comments on an earlier draft of the present manuscript. The first author would like to thank Haim Avron for his technical support on several issues regarding Blendenpik and Philip A. Knight for sharing his unpublished manuscript~\cite{K:error}.
%
\bibliographystyle{alpha}

\begin{thebibliography}{DMMS11}

\bibitem[ABD{\etalchar{+}}90]{LAPACK}
E.~Anderson, Z.~Bai, J.~Dongarra, A.~Greenbaum, A.~McKenney, J.~Du~Croz,
  S.~Hammerling, J.~Demmel, C.~Bischof, and D.~Sorensen.
\newblock {LAPACK: a portable linear algebra library for high-performance
  computers}.
\newblock In {\em Proceedings of the 1990 ACM/IEEE conference on
  Supercomputing}, Supercomputing '90, pages 2--11. IEEE Computer Society
  Press, 1990.

\bibitem[AMT10]{AMT10}
H.~Avron, P.~Maymounkov, and S.~Toledo.
\newblock {Blendenpik: Supercharging LAPACK's Least-squares Solver}.
\newblock {\em SIAM Journal on Scientific Computing}, 32(3):1217--1236, 2010.

\bibitem[Ans84]{K:rate:Ansorge}
R.~Ansorge.
\newblock {Connections between the {Cimmino}-method and the {Kaczmarz}-method
  for the Solution of Singular and Regular Systems of Equations}.
\newblock 33(3--4):367--375, September 1984.

\bibitem[BBC{\etalchar{+}}87]{book:templates}
R.~Barrett, M.~Berry, T.~F. Chan, J.~Demmel, J.~Donato, J.~Dongarra,
  V.~Eijkhout, R.~Pozo, C.~Romine, and H.~van~der Vorst.
\newblock {\em {Templates for the Solution of Linear Systems: Building Blocks
  for Iterative Methods}}.
\newblock Software, Environments, Tools. Society for Industrial and Applied
  Mathematics, 1987.

\bibitem[Bj96]{book:Bjork}
A.~Bj\"{o}rck.
\newblock {\em {Numerical Methods for Least Squares Problems}}.
\newblock Society for Industrial and Applied Mathematics, 1996.

\bibitem[CEG83]{K:relax:CEG83}
Y.~Censor, P.~Eggermont, and D.~Gordon.
\newblock {Strong Underrelaxation in {K}aczmarz's Method for Inconsistent
  Systems}.
\newblock {\em Numerische Mathematik}, 41:83--92, 1983.

\bibitem[Cen81]{K:apps}
Y.~Censor.
\newblock {Row-Action Methods for Huge and Sparse Systems and Their
  Applications}.
\newblock {\em SIAM Review}, 23(4):444--466, 1981.

\bibitem[CRT11]{ROP:CRT11}
E.~S. Coakley, V.~Rokhlin, and M.~Tygert.
\newblock {A Fast Randomized Algorithm for Orthogonal Projection}.
\newblock {\em SIAM J. Sci. Comput.}, 33(2):849--868, 2011.

\bibitem[CW09]{CW09}
K.~L. Clarkson and D.~P. Woodruff.
\newblock {Numerical Linear Algebra in the Streaming Model}.
\newblock In {\em Proceedings of the Symposium on Theory of Computing (STOC)},
  pages 205--214, 2009.

\bibitem[CW12]{ls:nnzA}
K.~L. Clarkson and D.~P. Woodruff.
\newblock {Low Rank Approximation and Regression in Input Sparsity Time}.
\newblock Available at~arXiv:1207.6365, July 2012.

\bibitem[CZ97]{book:Zenios}
Y.~Censor and S.~A. Zenios.
\newblock {\em {Parallel Optimization: Theory, Algorithms, and Applications}}.
\newblock Numerical Mathematics and Scientific Computation Series. Oxford
  University Press, 1997.

\bibitem[Dem88]{cond:Demmel}
J.~W. Demmel.
\newblock {The Probability that a Numerical Analysis Problem is Difficult}.
\newblock {\em Mathematics of Computation}, 50(182):pp. 449--480, 1988.

\bibitem[DMM06]{petrosSODA06}
P.~Drineas, M.~W. Mahoney, and S.~Muthukrishnan.
\newblock {Sampling Algorithms for $\ell_2$-regression and Applications}.
\newblock In {\em Proceedings of the ACM-SIAM Symposium on Discrete Algorithms
  (SODA)}, pages 1127--1136, 2006.

\bibitem[DMMS11]{fasterLS}
P.~Drineas, M.~W. Mahoney, S.~Muthukrishnan, and T.~Sarl\`{o}s.
\newblock {Faster Least Squares Approximation}.
\newblock {\em Numer. Math.}, 117(2):219--249, Feb 2011.

\bibitem[FCM{\etalchar{+}}92]{K:apps:CFM92}
H.~G. Feichtinger, C.~Cenker, M.~Mayer, H.~Steier, and T.~Strohmer.
\newblock {New Variants of the POCS Method using Affine Subspaces of Finite
  Codimension with Applications to Irregular Sampling}.
\newblock pages 299--310, 1992.

\bibitem[FZ12]{FZ12}
N.~M. Freris and A.~Zouzias.
\newblock {Fast Distributed Smoothing for Network Clock Synchronization}.
\newblock {\em {IEEE Conference on Decision and Control (CDC)}}, 2012.

\bibitem[Gal03]{book:Galantai}
A.~Gal{\'a}ntai.
\newblock {\em Projectors and Projection Methods}.
\newblock Advances in Mathematics. Springer, 2003.

\bibitem[GBH70]{ART}
R.~Gordon, R.~Bender, and G.~T. Herman.
\newblock {Algebraic Reconstruction Techniques (ART) for three-dimensional
  electron microscopy and X-ray photography}.
\newblock {\em Journal of Theoretical Biology}, 29(3):471 -- 481, 1970.

\bibitem[GL96]{book:GVL}
G.~H. Golub and C.~F.~Van Loan.
\newblock {\em {Matrix Computations}}.
\newblock {The Johns Hopkins University Press}, {Third} edition, 1996.

\bibitem[GV61]{Chebyshev}
G.~H. Golub and R.~S. Varga.
\newblock {Chebyshev Semi-iterative Methods, Successive Overrelaxation
  Iterative Methods, and Second order Richardson Iterative Methods}.
\newblock {\em Numerische Mathematik}, 3:157--168, 1961.

\bibitem[Her80]{book:K:apps:H80}
G.~T. Herman.
\newblock {\em {I}mage {R}econstruction from {P}rojections {T}he {F}undamentals
  of {C}omputerized {T}omography}.
\newblock {C}omputer {S}cience and {A}pplied {M}athematics. {N}ew {Y}ork etc.:
  {A}cademic {P}ress ({A} {S}ubsidiary of {H}arcourt {B}race {J}ovanovich,
  {P}ublishers). {X}{I}{V}, 1980.

\bibitem[Hig89]{testMat:Higham}
N.~J. Higham.
\newblock {A Collection of Test Matrices in MATLAB}.
\newblock Technical report, Ithaca, NY, USA, 1989.

\bibitem[Hig96]{book:higham}
Nicholas~J. Higham.
\newblock {\em Accuracy and Stability of Numerical Algorithms}.
\newblock Society for Industrial and Applied Mathematics, Philadelphia, PA,
  USA, first edition, 1996.

\bibitem[HM93]{RK:HM93}
G.~T. Herman and L.~B. Meyer.
\newblock {Algebraic Reconstruction Techniques Can Be Made Computationally
  Efficient}.
\newblock {\em IEEE Transactions on Medical Imaging}, 12(3):600--609, 1993.

\bibitem[HN90]{K:relax:Hanke90}
M.~Hanke and Wilhelm N.
\newblock {On the Acceleration of Kaczmarz's Method for Inconsistent Linear
  Systems}.
\newblock {\em Linear Algebra and its Applications}, 130(0):83 -- 98, 1990.

\bibitem[Kac37]{K}
S.~Kaczmarz.
\newblock {Angen\"{a}herte Auflösung von Systemen Linearer Gleichungen}.
\newblock {\em Bulletin International de l'Académie Polonaise des Sciences et
  des Lettres}, 35:355--357, 1937.

\bibitem[Kni93]{phdthesis:K:error}
P.~A. Knight.
\newblock {\em {Error Analysis of Stationary Iteration and Associated
  Problems}}.
\newblock {{PhD} in Mathematics}, Manchester University, 1993.

\bibitem[Kni96]{K:error}
P.~A. Knight.
\newblock {A Rounding Error Analysis of Row-Action Methods}.
\newblock Unpublished manuscript, May 1996.

\bibitem[LL10]{LS:RCD}
D.~Leventhal and A.~S. Lewis.
\newblock {Randomized Methods for Linear Constraints: Convergence Rates and
  Conditioning}.
\newblock {\em Math. Oper. Res.}, 35(3):641--654, 2010.

\bibitem[McC75]{K:rate:MC77}
S.~F. McCormick.
\newblock {An Iterative Procedure for the Solution of Constrained Nonlinear
  Equations with Application to Optimization Problems}.
\newblock {\em Numerische Mathematik}, 23:371--385, 1975.

\bibitem[MSM11]{lsrn}
X.~Meng, M.~A. Saunders, and M.~W. Mahoney.
\newblock {LSRN: A Parallel Iterative Solver for Strongly Over- and
  Under-Determined Systems}.
\newblock Available at~http://arxiv.org/abs/1109.5981, Sept 2011.

\bibitem[MZ11]{MZ11}
A.~Magen and A.~Zouzias.
\newblock {Low Rank Matrix-Valued Chernoff Bounds and Approximate Matrix
  Multiplication}.
\newblock In {\em Proceedings of the ACM-SIAM Symposium on Discrete Algorithms
  (SODA)}, pages 1422--1436, 2011.

\bibitem[Nat01]{book:K:apps:Nat01}
F.~Natterer.
\newblock {\em {The Mathematics of Computerized Tomography}}.
\newblock Society for Industrial and Applied Mathematics, 2001.

\bibitem[NDT09]{Nguyen09}
N.~H. Nguyen, T.~T. Do, and T.~D. Tran.
\newblock {A Fast and Efficient Algorithm for Low-rank Approximation of a
  Matrix}.
\newblock In {\em Proceedings of the Symposium on Theory of Computing (STOC)},
  pages 215--224, 2009.

\bibitem[Nee10]{Needell09}
D.~Needell.
\newblock {Randomized Kaczmarz Solver for Noisy Linear Systems}.
\newblock {\em Bit Numerical Mathematics}, 50(2):395--403, 2010.

\bibitem[Pop99]{popa}
C.~Popa.
\newblock {Characterization of the Solutions Set of Inconsistent Least-squares
  Problems by an Extended Kaczmarz Algorithm}.
\newblock {\em Journal of Applied Mathematics and Computing}, 6:51--64, 1999.

\bibitem[PS82]{PS82}
C.C. Paige and M.A. Saunders.
\newblock {LSQR: An algorithm for sparse linear equations and sparse least
  squares}.
\newblock {\em ACM Transactions on Mathematical Software (TOMS)}, 8(1):43--71,
  1982.

\bibitem[RT08]{RT08}
V.~Rokhlin and M.~Tygert.
\newblock {A Fast Randomized Algorithm for Overdetermined Linear Least-squares
  Regression}.
\newblock {\em Proceedings of the National Academy of Sciences},
  105(36):13212--13218, 2008.

\bibitem[Saa03]{book:saad}
Y.~Saad.
\newblock {\em {Iterative Methods for Sparse Linear Systems}}.
\newblock Society for Industrial and Applied Mathematics, 2nd edition, 2003.

\bibitem[Sar06]{Sarlos}
T.~Sarl\`{o}s.
\newblock {Improved Approximation Algorithms for Large Matrices via Random
  Projections}.
\newblock In {\em Proceedings of the Symposium on Foundations of Computer
  Science (FOCS)}, pages 143--152, 2006.

\bibitem[Smi02]{aliasMethod:C}
W.~D. Smith.
\newblock {How to Sample from a Probability Distribution}.
\newblock Technical report, Princeton, NJ, USA, 2002.

\bibitem[SV09]{RK}
T.~Strohmer and R.~Vershynin.
\newblock {A Randomized Kaczmarz Algorithm with Exponential Convergence}.
\newblock {\em Journal of Fourier Analysis and Applications}, 15(1):262--278,
  2009.

\bibitem[Tan71]{K:Tanabe}
K.~Tanabe.
\newblock {Projection Method for Solving a Singular System of Linear Equations
  and its Applications}.
\newblock {\em Numerische Mathematik}, 17:203--214, 1971.

\bibitem[Tom55]{K:tompk}
C.~Tompkins.
\newblock {Projection Methods in Calculation}.
\newblock In {\em Proc. 2nd Symposium of Linear Programming}, pages 425--448,
  Washington, DC, 1955.

\bibitem[Vos91]{random:Alias}
M.~D. Vose.
\newblock {A Linear Algorithm for Generating Random Numbers with a given
  Distribution}.
\newblock {\em IEEE Trans. Softw. Eng.}, 17(9):972--975, September 1991.

\bibitem[Wal77]{sampling:Walker}
A.~J. Walker.
\newblock {An Efficient Method for Generating Discrete Random Variables with
  General Distributions}.
\newblock {\em ACM Trans. Math. Softw.}, 3(3):253--256, September 1977.

\bibitem[WM67]{K:rate:WM67}
T.~Whitney and R.~Meany.
\newblock {Two Algorithms related to the Method of Steepest Descent}.
\newblock {\em SIAM Journal on Numerical Analysis}, 4(1):109--118, 1967.

\end{thebibliography}
\newcommand{\etalchar}[1]{$^{#1}$}

%
%

\section{Appendix}
%
We present the proof of known facts from previous works for completeness.

%
\begin{proof}(of Lemma~\ref{lem:ortho})
It suffices to show that $\ip{\x^{(k+1)} - \xls}{\x^{(k+1)} - \x^{(k)}} = 0$. For notational convenience, let $\alpha_i := \frac{b_i - \ip{\x^{(k)}}{\ar{i}}}{\norm{\ar{i}}^2}$ for every $i\in{[m]}$. Assume that $\x^{(k+1)} = \x^{(k)} + \alpha_{i_k} \ar{i_k}$ for some arbitrary $i_k\in [m]$. Then,
\begin{align*}
	\ip{\x^{(k+1)} - \xls}{\x^{(k+1)} - \x^{(k)} }  & =   \ip{\x^{(k+1)} - \xls}{ \alpha_{i_k} \ar{i_k}} \ = \ \alpha_{i_k}\left(\ip{\x^{(k+1)} }{ \ar{i_k}} -  b_{i_k}\right)
\end{align*}
using the definition of $\x^{(k+1)}$, and the fact that $\ip{\xls}{\ar{i_k}} = b_{i_k}$ since $\xls$ is a solution to $\matA\x=\b$. Now, by the definition of $\alpha_{i_k} $, $\ip{\x^{(k+1)}}{\ar{i_k}} = \ip{\x^{(k)}}{\ar{i_k}} + \alpha_{i_k} \norm{\ar{i_k}}^2 = \ip{\x^{(k)}}{\ar{i_k}} + b_{i_k} - \ip{\x^{(k)}}{\ar{i_k}} = b_{i_k}$.
\end{proof}
%

%
\begin{proof}(of Lemma~\ref{lem:avg})
In light of Lemma~\ref{lem:ortho}, it suffices to show that $\EE_{Z}\norm{\x^{(k+1)} - \x^{(k)}}^2 \geq \frac1{\kappaFS(\matA)} \norm{\x^{(k)} - \xls}^2$.
By the definition of $\x^{(k+1)}$, it follows
\begin{align*}
\EE_{Z}\norm{\x^{(k+1)} - \x^{(k)}}^2  &= \EE_{Z} \left[\left(\frac{b_Z - \ip{\x^{(k)}}{\ar{Z} }}{\norm{\ar{Z}}^2}\right)^2 \norm{\ar{Z}}^2\right] \ =\  \EE_{Z} \frac{\ip{\xls - \x^{(k)} }{\ar{Z}}^2}{\norm{\ar{Z}}^2} \\
 & =   \sum_{i=1}^{m} \frac{\ip{\xls - \x^{(k)}}{\ar{i}}^2}{\frobnorm{\matA}^2} = \frac{\norm{\matA (\xls - \x^{(k)})}^2}{\frobnorm{\matA}^2}.
\end{align*}
By hypothesis, $\x^{(k)}$ is in the row space of $\matA$ for any $k$ when $\x^{(0)}$ is; in addition,
the same is true for $\xls$ by the definition of pseudo-inverse~\cite{book:GVL}. Therefore,
$\norm{\matA(\xls - \x^{(k)})} \geq \sigma_{\min} \norm{\xls - \x^{(k)} }$.
\end{proof}

%
\begin{proof}(of Theorem~\ref{thm:RK:inconsistent})
As in~\cite{Needell09}, for any $i\in{[m]}$ define the affine hyper-planes:
\begin{align*}
\mathcal{H}_i &:= \{\x: \ip{\ar{i}}{\x} = y_i\}\\
\mathcal{H}_i^{w_i} &:= \{\x: \ip{\ar{i} }{\x} = y_i + w_i\}
\end{align*}
Assume for now that at the $k$-th iteration of the randomized Kaczmarz algorithm applied on $(\matA, \b)$, the $i$-th row is selected. Note that $\hat\x^{(k)}$ is the projection of $\hat\x^{(k-1)}$ on $\mathcal{H}_i^{w_i}$ by the definition of the randomized Kaczmarz algorithm on input $(\matA,\b)$. Let us denote the projection of $\hat\x^{(k-1)}$ on $\mathcal{H}_i$ by $\x^{(k)}$. The two affine hyper-planes $\mathcal{H}_i,\mathcal{H}_i^{w_i}$ are \emph{parallel} with common normal $\ar{i}$, so $\x^{(k)}$ is the projection of $\hat\x^{(k)}$ on $\mathcal{H}_i$ and the minimum distance between $\mathcal{H}_i$ and $\mathcal{H}_i^{w_i}$ equals $|w_i| / \norm{\ar{i}}$. In addition, $\x^{*}\in \mathcal{H}_i$ since $\ip{\x^{*}}{\ar{i}} = y_i$, therefore by orthogonality we get that
\begin{equation}\label{eq:1}
\norm{\hat\x^{(k)} - \x^{*}}^2 = \norm{\x^{(k)} - \x^{*}}^2 + \norm{\hat\x^{(k)} - \x^{(k)}}^2.
\end{equation}
Since $\x^{(k)}$ is the projection of $\hat\x^{(k-1)}$ onto $\mathcal{H}_i$ (that is to say, $\x^{(k)}$ is a randomized Kaczmarz step applied on input $(\matA, \y)$ where the $i$-th row is selected on the $k$-th iteration) and $\hat\x^{(k-1)}$ is in the row space of $\matA$, Lemma~\ref{lem:avg} tells us that
\begin{equation}\label{eq:2}
 \EE\norm{\x^{(k)} - \x^{*}}^2 \le \left(1 - \frac1{\kappaFS(\matA)}\right) \norm{\hat\x^{(k-1)} - \x^{*}}^2.
\end{equation}
Note that for given selected row $i$ we have $ \norm{\hat\x^{(k)} - \x^{(k)}}^2= \frac{w_i^2}{\norm{\ar{i}}^2}$;
by the distribution of selecting the rows of $\matA$ we have that 
\begin{equation}\label{eq:3}
\E\norm{\hat\x^{(k)} - \x^{(k)}}^2 = \sum_{i=1}^{m} q_i \frac{w_i^2}{\norm{\ar{i}}^2} = \frac{\norm{\w}^2}{\frobnorm{\matA}^2}.
\end{equation}
Inequality~\eqref{ineq:relaxRK} follows by taking expectation on both sides of Equation~\eqref{eq:1} and bounding its resulting right hand side using Equations~\eqref{eq:2} and~\eqref{eq:3}. Applying Inequality~\eqref{ineq:relaxRK} inductively, it follows that 
\[\EE \norm{\hat\x^{(k)} - \x^{*}}^2 \le \left(1-\frac1{\kappaFS(\matA)}\right)^k\norm{\x^{(0)}- \x^{*}}^2 + \frac{\norm{\w}^2}{\frobnorm{\matA}^2}\sum_{i=0}^{k} \left(1-\frac1{\kappaFS(\matA)}\right)^i,\]
where we used that $\x^{(0)}$ is in the row space of $\matA$. The latter sum is bounded above by $\sum_{i=0}^{\infty} \left(1-\frac1{\kappaFS(\matA)}\right)^i =\frobnorm{\matA}^2 / \sigma^2_{\min}$.
\end{proof}

\end{document}